\documentclass[11pt,a4paper, parskip=half]{scrartcl} 
\usepackage[utf8]{inputenc}
\usepackage[english]{babel}
\usepackage[T1]{fontenc}
\usepackage{lmodern}
\usepackage[left=3cm,right=3cm,top=3cm,bottom=3cm]{geometry}
	\setlength{\parindent}{15pt}		

\usepackage[runin]{abstract}
    \abslabeldelim{.}

\usepackage{todonotes}

\usepackage{amsmath, amsfonts, amssymb, amsthm, amstext}
\usepackage{mathtools}
\usepackage{mathrsfs}
\usepackage{mathdots}
\usepackage{cases}

\usepackage{graphicx}
\usepackage{subcaption}
\usepackage{algorithm}

\font\mfett=cmmib10 at11pt
\font\mfetts=cmmib10 at9pt

\def\gamra{\hbox{\mfett\char013}}
\def\gamras{\hbox{\mfetts\char013}}
\def\berta{\hbox{\mfett\char012}}
\def\balpha{\hbox{\mfett\char011}}
\def\blambda{\hbox{\mfett\char021}}
\def\bslambda{\hbox{\mfetts\char021}}

\usepackage{enumitem}

\usepackage{hyperref} 
\usepackage[capitalise]{cleveref}
	
\newcounter{thm}
\numberwithin{thm}{section}
\numberwithin{equation}{section}

	\newtheoremstyle{myplain}		
			{}			
			{}			
			{\itshape}				
			{}				
			{\sffamily\bfseries}				
			{.}		
			{ }				
			{\thmname{#1}\thmnumber{ #2}\textnormal{\textsf{\thmnote{ (#3)}}}}			
    \newtheoremstyle{mybreak}
            {}{}{}{}{\sffamily\bfseries}{.}{\newline}
            {\thmname{#1}\thmnumber{ #2}\textnormal{\textsf{\thmnote{ (#3)}}}}
	\newtheoremstyle{mydef}
			{}{}{}{}{\sffamily\bfseries}{.}{ }
			{\thmname{#1}\thmnumber{ #2}}
	\newtheoremstyle{myrem}
			{}{}{}{}{\sffamily\itshape}{.}{ }
			{\thmname{#1}\thmnumber{ #2}}

\theoremstyle{myplain}
	\newtheorem{theorem}[thm]{Theorem}
	\newtheorem{lemma}[thm]{Lemma}

\theoremstyle{mybreak}
\theoremstyle{mydef}
	
	\newtheorem{remark}[thm]{Remark}
\theoremstyle{mydef}

		\newcommand{\nn}{\mathbb{N}}

    \captionsetup[figure]{name={\textbf{Fig.}},labelsep=space}

\newcommand{\argmin}{\mathop{\mathrm{argmin}}}

\allowdisplaybreaks

\def\sumprime_#1^#2{
    \setbox0=\hbox{$\scriptstyle{#1}$}
    \setbox1=\hbox{$\scriptstyle{#2}$}
    \setbox2=\hbox{$\displaystyle{\sum}$}
    \setbox4=\hbox{${}^\prime\mathsurround=0pt$}
    \dimen0=.5\wd0 \advance\dimen0 by-.5\wd2
    \ifdim\dimen0>0pt
        \ifdim\dimen0>\wd4 \kern\wd4
        \else\kern\dimen0
        \ifdim\dimen1>\wd4 \kern\wd4
        \else\kern\dimen1
    \fi\fi\fi
\mathop{{\sum}^\prime}_{\kern-\wd4 #1}^{\kern-\wd4 #2}
}

\title{\Large Differential approximation of the Gaussian by short cosine sums with  exponential error decay}
\author{Nadiia Derevianko\footnote{TUM School of CIT, 
Department of Computer Science,
Boltzmannstrasse 3,
85748 Garching b. München,
Germany,  nadiia.derevianko@tum.de}  \footnote{Corresponding author} \quad Gerlind Plonka \footnote{Institute for Numerical and Applied Mathematics, G\"ottingen University, Lotzestr.\ 16-18, 37083 G\"ottingen, Germany, plonka@math.uni-goettingen.de} }
\date{\today}

\begin{document}
	\let\oldproofname=\proofname
	\renewcommand{\proofname}{\itshape\sffamily{\oldproofname}}

\maketitle

\begin{abstract}
In this paper, we propose a method to approximate the Gaussian function on ${\mathbb R}$ by a short cosine sum.  
We generalise and extend the differential approximation method proposed in  \cite{Bell, Scha79}  to approximate $\mathrm{e}^{-t^{2}/2\sigma}$  in the weighted space $L^{2}({\mathbb R},  \mathrm{e}^{-t^{2}/2\rho})$ where $\sigma, \, \rho >0$.  We prove that the optimal frequency parameters $\lambda_1, \ldots , \lambda_{N}$  for this method in the approximation problem $ \min\limits_{\lambda_{1},\ldots, \lambda_{N},  \gamma_{1},  \ldots, \gamma_{N}}\|\mathrm{e}^{-\cdot^{2}/2\sigma} - \sum_{j=1}^{N} \gamma_{j} \, {\mathrm e}^{\lambda_{j} \cdot}\|_{L^{2}({\mathbb R}, \mathrm{e}^{-t^{2}/2\rho})}$,  are zeros of a scaled Hermite polynomial. This observation leads us to a numerically stable approximation method with low computational cost of ${\mathcal O}(N^{3})$ operations.  We derive a direct algorithm to solve this approximation problem based on a matrix pencil method for a special structured matrix. The entries of this matrix are determined by hypergeometric functions. 
 For the weighted $L^{2}$-norm, we prove that the  approximation error decays exponentially with respect to the length $N$ of the sum. An exponentially decaying error in the (unweighted) $L^{2}$-norm is achieved using a truncated cosine sum.
 Our new convergence result for approximation of Gaussian functions by exponential sums of length $N$ shows that exponential error decay rates $e^{-cN}$ are not only achievable for complete monotone functions.

\textbf{Keywords:}  sparse exponential sums, sparse cosine sum,  Gaussian function, differential operator, Hermite polynomials, Gauss-Hermite quadrature,  hypergeometric function.\\
\textbf{AMS classification:}
41A20, 42A16, 42C15, 65D15, 94A12.
\end{abstract}

\section{Introduction}

Gaussian functions are widely used in statistics, signal processing,  molecular modeling, computational chemistry,  as well as in approximation theory, see e.g. \cite{Greengard2022,HR10,BCS,  DerPr2020}. However, since the Gaussian is not always simple to handle on finite intervals, an exact approximation of the Gaussian is helpful in different contexts. 
In this paper we propose a method to approximate the Gaussian  $f(t)=\mathrm{e}^{-t^{2}/2\sigma}$ for $\sigma>0$ on the real line ${\mathbb R}$ and on symmetric intervals around $0$ by short exponential sums. 
In particular, since $f(t)$ is a symmetric function in $R$, the obtained  approximation 
is a short cosine sum.

In this paper, we mainly study the approximation problem in the weighted space $L^{2}({\mathbb R}, \rho)$ with the norm
$$
 \| f\|^2_{L^{2}({\mathbb R}, \rho)} := \int_{-\infty}^{\infty} |f(t)|^{2} \, {\mathrm e}^{-t^{2}/2\rho} \, {\mathrm d} t 
$$
for some given $\rho >0$.
To find an exponential sum $y(t)= \sum_{j=1}^{N} \gamma_{j} \, {\mathrm e}^{\lambda_{j} t}$ 
with $\gamma_{j}, \, \lambda_{j} \in {\mathbb C}$
that approximates $\mathrm{e}^{-t^{2}/2\sigma}$ on ${\mathbb R}$, we have to solve the minimization problem  
$$ \min_{\bslambda \in {\mathbb C}^{N}, \gamras \in {\mathbb C}^{N}} \|\mathrm{e}^{-\cdot^{2}/2\sigma} - \sum_{j=1}^{N} \gamma_{j} \, {\mathrm e}^{\lambda_{j} \cdot}\|_{L^{2}({\mathbb R}, \rho)}.
$$
This  problem is however non-linear and non-convex and therefore very difficult to solve.
We will use a special approximation method, which is also called differential approximation method.
 Replacing the weighted norm $\| \cdot \|_{L^{2}({\mathbb R}, \rho)}$ by an unweighted norm $\| \cdot \|_{L^{2}}$ we show that our algorithm achieves exponential error decay on any interval $[-L,L]$ for $L>0$  by suitable truncation.

\subsection{Differential approximation method}
\label{sec1.1}
This method, proposed in e.g.\ \cite{Bell, Kam1981,Scha79}, is based on the following observation.
For every exponential sum $y(t)= y_{N}(t)= \sum_{j=1}^{N} \gamma_{j} \, {\mathrm e}^{\lambda_{j} t}$, there exists a differential operator $D_{N}= D_{N}(\blambda)$  given by 
\begin{equation}\label{difoper}
D_{N} f (t):= \frac{d^{N}}{d t^{N}}f(t)+b_{N-1}\frac{d^{N-1}}{d t^{N-1}} f(t)+\ldots+ b_0 f(t), \qquad f  \in C^{N}({\mathbb R}),
\end{equation}
 with constant coefficients $b_{0}, \ldots , b_{N-1} \in {\mathbb C}$ such that 
 $D_{N} y(t) =0. $
 This differential operator is determined by the coefficients of the monomial representation of the  (characteristic) polynomial
\begin{equation}\label{P} P_{N} (\lambda) = \prod_{j=1}^{N} (\lambda- \lambda_{j}) = \lambda^{N} + \sum_{k=0}^{N-1} b_{k} \, \lambda^{k}. 
\end{equation}
Therefore,  assuming that a function $f$ can be well approximated by a short exponential sum, it should be possible to find a differential operator $D_{N}$ such that $D_{N} f$ is ''small''. 
Consequently, we apply the following strategy to approximate $f(t)= {\mathrm e}^{-t^{2}/2\sigma}$ by an exponential sum $y_{N}(t)$.
In a first step, we  determine a polynomial $P_{N}(\lambda)$ of the form (\ref{P}), i.e., we determine the vector ${\mathbf b}=(b_{0}, \ldots , b_{N-1})^{T}$ of coefficients of $P_{N}(\lambda)$ by solving 
\begin{align}\label{minb}
\argmin\limits_{{\mathbf b} \in {\mathbb C}^{N}}\|D_{N} f\|_{L^{2}({\mathbb R}, \rho)}.
\end{align}
Then the zeros $\lambda_{j}$ of the characteristic polynomial $P_{N}(\lambda)$ in (\ref{P}) are taken as the frequencies of the exponential sum to approximate $f$. In a second step, we compute  the vector $\gamra=(\gamma_{1}, \ldots , \gamma_{N})^{T}$ of coefficients of the exponential sum  by solving 
\begin{align}\label{ming}
\argmin\limits_{\gamras \in {\mathbb C}^{N}} \|f - \sum\limits_{j=1}^{N} \gamma_{j} \, {\mathrm e}^{{\lambda_j} \cdot}\|_{L^{2}({\mathbb R}, \rho)}.
\end{align}

\subsection{Contribution of this paper}
As we will show in Subsection \ref{sec:2.1},  the minimization problem 
(\ref{minb}) 
 can be solved analytically for $f(t)= {\mathrm e}^{-t^{2}/2\sigma}$, and the coefficient vector ${\mathbf b}$ can be explicitly given. Moreover, the corresponding  characteristic polynomial $P(\lambda)$ in (\ref{P}) is a normalized scaled Hermite polynomial of degree $N$, such that the zeros $\lambda_{j}$ can be simply precomputed with high accuracy.   This observation has been noticed in \cite{Bell, Scha79} for $\sigma=\rho=1$.  We extended these ideas for $\sigma,\rho>0$.
 
 The second minimization problem (\ref{ming}) leads to an equation system of size $N \times N$, where the coefficient matrix is positive definite.
Since $N$ is small, the obtained algorithm requires only a small computational effort while providing very good approximation results.

While searching for optimal parameter vectors $\blambda, \, \gamra \in {\mathbb C}^{N}$, we  show  that the resulting ordered optimal parameters $\lambda_{j}$  are purely imaginary, i.e., $\lambda_{j} \in {\mathrm i} {\mathbb R}$,  and satisfy   $\lambda_{j}= -\lambda_{{N+1-j}}$, while the ordered optimal parameters $\gamma_{j}$ are real,  satisfying $\gamma_{j}= \gamma_{{N+1-j}}$.
In other words, our algorithm yields a cosine sum of length $\lfloor (N+1)/2\rfloor$.  

In Subsection \ref{appreal} we show that the minimization problem (\ref{minb}) can be rewritten as a matrix pencil problem with special matrices whose entries are defined via hypergeometric functions. This observation leads us to  the interesting side result that the eigenvalues of this special matrix pencil (see (\ref{matpen}) in Section  \ref{appreal}) are zeros of scaled Hermite polynomials. Related ideas can be also found  in  \cite{DI2007}, were eigenvalues of some special matrix pencils are approximated by zeros of orthogonal polynomials.  Furthermore, our  result shows the relation between approximation of the Gaussian in the weighted space $L^{2}({\mathbb R},  \mathrm{e}^{-t^{2}/2\rho})$ and  hypergeometric functions.  This connection was also noticed in  \cite{DerPr2020}, where the Gaussian has been approximated by partial Fourier sums with respect to the spherical Gauss-Laguerre basis.   The matrix pencil approach gives us the opportunity to show that zeros of a scaled Hermite polynomial can be used also for approximation in a finite segment $[-T,T]$ for $T>0$ large enough.
\smallskip

In Section \ref{sec:error}, we show that the proposed method leads to an approximation error 
$$\Big\|\mathrm{e}^{-\cdot^{2}/2\sigma} - \sum\limits_{j=1}^{\lfloor (N+1)/2\rfloor} \tilde{\gamma}_{j} \, \cos( |\lambda_{j}| \cdot) \Big\|_{L^{2}({\mathbb R}, \rho)}
< \textstyle \Big(\frac{r}{\sqrt{2(2r+1)}}\Big)^{N} \, N^{3/4}  
,$$ where $r:=\frac{\rho}{\sigma}$. 
For example, for $r=\frac{1}{2}$, we therefore obtain the error decay rate 
$4^{-N} N^{3/4} < 3^{-N}$,  where $\lfloor (N+1)/2 \rfloor$ is the length of the cosine sum. 
The proof of Theorem \ref{maintheorem} is heavily based on the fact that the Gauss-Hermite quadrature rule leads to exponential  decay rates for Gaussian functions. The proof of Theorem \ref{maintheorem} employs explicitly given suboptimal coefficients $\gamma_{j}$, which are determined by  the weights of the Gauss-Hermite quadrature rule, see formula(\ref{gammaneu}).
Since the Gaussian ${\mathrm e}^{-t^{2}/2\sigma}$ itself decays exponentially, we further derive an error estimate in the $L^{2}({\mathbb R})$ norm of the form 
$$ \textstyle \int\limits_{-\infty}^{\infty}  \Big|  {\mathrm e}^{-t^{2}/2\sigma} - \chi_{[-T,T]} {(t)}\, \sum\limits_{k=1}^{\lfloor {(N+1)/2}\rfloor} \tilde{\gamma}_{k} \, \cos(|{\lambda_{k}|t}) \Big|^{2} \, {\mathrm d} t \le  \frac{\tilde{c}}{16^{N/2}} \, N^{3/2}, $$
using a truncated cosine sum, where the choice of $T$ depends on $N$ and $\sigma$.
 These results  are new and particularly show that not only completely monotone functions can be approximated by short exponential sums of length $N$ with error decay ${\mathrm e}^{-cN}$.

In Section \ref{sec:4}, we compare our approach of Section \ref{infinseg} with other Prony-like methods to approximate  the Gaussian by an exponential sum.
We  study a Prony-like method based on the differential operator, see \cite{PePl2013, StPl2020}, which uses only function and derivative values of the Gaussian at $t_{0}=0$. Our new approach outperforms both of these methods regarding the error, the numerical stability and the computational effort.

Finally, we consider the numerical methods ESPRIT \cite{RK89,PT13} and ESPIRA \cite{DPP21,DPR23} to approximate $f(t) = {\mathrm e}^{-t^{2}/2\sigma}$, which employ a finite number of equidistant function values of $f$. To achieve a cosine sum as a resulting approximation, one needs to employ corresponding variants of ESPRIT and ESPIRA, see e.g.\ \cite{DPR23}. The original algorithms ESPRIT and ESPIRA both provide very good approximation results, but yield complex exponential  sums instead of real cosine sums.

\subsection{Related results}

Exponential sum models are widely used in applied sciences and several algorithms proposed for function approximation by short exponential sums. 
Several authors studied the approximation  of special completely monotone functions, as for example $f(t)= \frac{1}{1+t}$, see \cite{BM05, BM10, BH2005,H19, Kam1976, Kam1979, Kam1981, Scha79}.
Further, the approximation of Bessel functions \cite{BM05,CL20,DPR23}, and of the Dirichlet kernel \cite{BM05,DPP21} has been considered. 
To approximate functions by exponential sums on an interval, often the usual Prony-like reconstruction algorithms can be successfully employed, see e.g.\ \cite{PT13, Pbook}.  Other approaches lead to non-convex minimization problems, which are treated by  iterative methods \cite{OS95, ZP19}. 
As described before, our approach is related to \cite{Kam81, Scha79}, but extends it essentially to derive a stable algorithm and new error estimates for the Gaussian.
 Approximations of the Gaussian by  scaling functions and biorthogonal scaling polynomials can be found in \cite{Lee2009}, while in \cite{DerPr2020}  a spherical Gauss-Laguerre basis has been used.

Unfortunately,  there are not many theoretical results available investigating the  error for approximation by exponential sums  more closely.
Results by Kammler \cite{Kam1981}, Braess, and Hackbusch \cite{Bbook, Br95, BH09}  show that special completely monotone functions on $[0, \infty)$ or on finite intervals $[a,b] \subset [0, \infty)$ can be approximated by exponential sums with exponential error decay  ${\mathrm e}^{-cN}$ or ${\mathrm e}^{-c\sqrt{N}}$  with respect to different (weighted) norms (including $L_{\infty}([a,b])$-norm \cite{Bbook}, Theorem VI, 3.4, weighted $L_{\infty}([a,b])$-norm \cite{Br95}, weighted $L_{1}([0, \infty))$-norm \cite{Br95}, $L^{2}([0, \infty))$-norm  \cite{Kam1981,Br95}, $L^{2}([a, b])$-norm \cite{Kam1981}).
We note that these results also imply that for example the sinc function, as a  product of a completely monotone function and an exponential sum, can be approximated with exponential decay.
Recently,  Koyama \cite{K23} studied exponential sum approximation for finite completely monotone functions on $[0,\infty)$. 
However, for the approximation of the Gaussian on finite intervals or on the real line, we are not aware of any convergence results with exponential decay ${\mathrm e}^{-cN}$. This paper gives such decay errors for the first time. 

 Recently, Jiang and Greengard \cite{Greengard2022} proposed to approximate the Gaussian on $[0, \infty)$ using the inverse Laplace transform
\begin{align}\label{green}  \frac{1}{\sqrt{{4}\pi t}} {\mathrm e}^{-\frac{|x|^{2}}{4t}} = \frac{1}{2\pi {\mathrm i}}
 \int\limits_{\Gamma} {\mathrm e}^{st} \, \frac{1}{2 \sqrt{s}} \, {\mathrm e}^{-\sqrt{s}|x|} \, {\mathrm d}s, 
\end{align} 
 where $\Gamma$ is a suitable contour.
An exponential sum to approximate the Gaussian is then achieved by discretization of this contour integral. 
This idea to derive an approximation by short exponential sums with exponentially decaying error mimics the earlier applied approaches for approximation of complete monotone functions.
Depending on the chosen contour, exponential error rates of $c^{-\sqrt{N}}$ or even $c^{-N}$ can be achieved numerically. Theoretically, this error decay is not  justified so far since the mentioned quadrature rules, see e.g.\ \cite{TW14,T21} do not apply in the considered case.



\section{Differential Method for Approximation of the Gaussian on $\mathbb{R}$}
\label{infinseg}

\subsection{Approximation with frequency parameters being zeros of a scaled Hermite polynomial}
\label{sec:2.1}
We want to approximate $f(t)= {\mathrm e}^{-t^{2}/2\sigma}$ for $\sigma>0$ on ${\mathbb R}$ by an exponential sum of length $N$ using the differential approximation method described in Section \ref{sec1.1}.
In the first step, we  determine  the characteristic polynomial $P_{N}(\lambda)= \lambda^{N} + \sum_{k=0}^{N-1} b_{k} \lambda^{k}$, where ${\mathbf b}$ is the solution of (\ref{minb}). 
%
 Afterwards, the zeros $\lambda_{j}$, $j=1, \ldots , N$, of $P_N$ in (\ref{P})  will serve as the frequencies of the exponential sum to approximate ${\mathrm e}^{-t^{2}/2\sigma}$,  and we solve the least squares problem (\ref{ming}) in a second step.
%

\noindent
\textbf{Step 1.}
 We recall the definition of the  physicist's  Hermite polynomial 
using the Rodrigues formula, see \cite{AS}, 
\begin{equation}\label{rodfor}
H_n(t) := \textstyle (-1)^{n} \mathrm{e}^{t^{2}} \frac{\mathrm{d}^{n}}{\mathrm{d}t^{n}}  \mathrm{e}^{-t^{2}}. 
\end{equation}
Its monomial representation is of the form 
\begin{equation}\label{mono}
H_{n}(t) = \textstyle n! \sum_{\ell=0}^{\lfloor \frac{n}{2} \rfloor} \frac{(-1)^{\ell}}{\ell! (n-2\ell)!} (2t)^{n-2\ell}
\end{equation}
and  $H_{n}$  can be recursively defined with $H_{0}(t) := 1$, $H_{1}(t) := 2t$ and 
\begin{equation}\label{rec} H_{n+1}(t) := {2}t \, H_{n}(t) - H_{n}'(t) = 2 t H_{n}(t) - 2n H_{n-1}(t). 
\end{equation}
Obviously, $H_{n}$ possesses the leading coefficient $2^{n}$.
These Hermite polynomials are orthogonal with respect  to the 
 weight function  $w(t)=\mathrm{e}^{-t^{2}}$, and we have
\begin{equation}\label{orth} \textstyle
\int\limits_{-\infty}^{\infty} H_n(t) \, H_m(t) \, \mathrm{e}^{-t^{2}} \, \mathrm{d} t=\sqrt{\pi} \, 2^{n}\, n! \, \delta_{n,m},
\end{equation}
where $\delta_{m,n}$ denotes the Kronecker symbol. The Rodrigues formula (\ref{rodfor}) implies   for $f(t) = {\mathrm e}^{-t^{2}/2\sigma}$  that 
\begin{align} \textstyle
D_{N} f(t) = \sum\limits_{k=0}^{N} b_{k}  \frac{d^{k}}{d t^{k}} \, {\mathrm e}^{-t^{2}/2\sigma} 
&=  {\mathrm e}^{-t^{2}/2\sigma} \, \sum_{k=0}^{N} b_{k} \, (-1)^{k}(2\sigma)^{-\frac{k}{2}} \,   H_{k} \Big( \frac{t}{\sqrt{2\sigma}} \Big).
\label{2.5}
\end{align}
Let $\rho >0$ be given. To determine the characteristic polynomial $P_N(\lambda)$, we have to compute the vector 
${\mathbf b}=(b_{0},b_{1}, \ldots, b_{N-1})^{T} \in {\mathbb C}^{N}$ that minimizes the functional
\begin{equation}\label{Fbrho0} F_\rho({\mathbf b}):= \| D_{N} {\mathrm e}^{-\cdot^{2}/2\sigma}\|^2_{L^{2}({\mathbb R},\rho)} = \textstyle
\int\limits_{-\infty}^{\infty}  |D_{N} {\mathrm e}^{-t^{2}/2\sigma} |^{2} \, {\mathrm e}^{-t^{2}/2\rho} \, \mathrm{d}t
\end{equation}
with $D_{N}$ in (\ref{difoper}).

 Generalizing the results from \cite{Scha79} for $\sigma=\rho=1$ we provide an explicit presentation of 
 ${\mathbf b}= \argmin\limits_{\tilde{\mathbf b} \in {\mathbb C}^{N}} F_\rho(\tilde{\mathbf b})$.  Moreover, we show that the  characteristic polynomial ${P}_{N}(\lambda) = \lambda^{N} + \sum_{k=0}^{N-1} {b}_{k} \lambda^{k}$ is a scaled Hermite polynomial with $N$ symmetric singular zeros on the imaginary axis. 

\begin{theorem}\label{theo1}
 For $\rho >0$, the minimizing vecor ${\mathbf b} \in {\mathbb C}^{N}$  of 
 the functional $F_\rho({\mathbf b})$ in $(\ref{Fbrho0})$
is given by ${\mathbf b}= ({b}_{0}, {b}_{1}, \ldots , {b}_{N-1})^{T}$ with
\begin{align}
\label{bktilde}
{b}_{k} = \left\{ \begin{array}{ll}  \frac{{N!}}{k! (\frac{N-k}{2})!} \left( \frac{\rho+\sigma}{2\sigma(2\rho+\sigma)} \right)^{(N-k)/2}  & N-k \, \textrm{even},  \\
0 & N-k \, \textrm{odd}. \end{array} \right.
\end{align}
Moreover, the corresponding characteristic polynomial $P_N(\lambda)$ is a weighted scaled Hermite polynomial of degree $N$,
\begin{align} \textstyle 
 {P}_{N}(\lambda) = \lambda^{N} + \sum\limits_{k=0}^{N-1} {b}_{k} \lambda^{k} =\left( -{\mathrm{i}} \, \sqrt{\frac{\rho+\sigma}{2\sigma(2\rho+\sigma)}}  \right)^{N}\,H_N\left({ \mathrm{i}} \, \sqrt{\frac{\sigma(2\rho+\sigma)}{2(\rho+\sigma)}}  \, \lambda \right) . 
 \end{align}
 \end{theorem}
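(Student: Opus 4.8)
The plan is to recast the minimisation~(\ref{minb}) as a classical polynomial least-squares problem in a Gaussian-weighted $L^{2}$ space, in which the minimiser is forced to be a scaled Hermite polynomial, and then to read off the vector $\mathbf b$ and the polynomial $P_{N}$. By~(\ref{2.5}), $D_{N}{\mathrm e}^{-t^{2}/2\sigma}={\mathrm e}^{-t^{2}/2\sigma}\,q(t)$ with $q(t):=\sum_{k=0}^{N}b_{k}(-1)^{k}(2\sigma)^{-k/2}H_{k}(t/\sqrt{2\sigma})$ and $b_{N}=1$, so that $F_{\rho}(\mathbf b)=\int_{-\infty}^{\infty}|q(t)|^{2}\,{\mathrm e}^{-a^{2}t^{2}}\,{\mathrm d}t$ with $a^{2}:=\tfrac1\sigma+\tfrac1{2\rho}=\tfrac{2\rho+\sigma}{2\rho\sigma}$. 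Since $\{H_{k}(\cdot/\sqrt{2\sigma})\}_{k=0}^{N-1}$ spans the polynomials of degree $\le N-1$, letting $(b_{0},\dots,b_{N-1})$ range over ${\mathbb C}^{N}$ makes $q$ range over all polynomials of degree $\le N$ with the fixed leading coefficient $(-1/\sigma)^{N}$; moreover, as ${\mathrm e}^{-\cdot^{2}/2\sigma}$ and all its derivatives are real, the imaginary parts of the $b_{k}$ only enlarge $F_{\rho}$, so the minimiser is real. Thus~(\ref{minb}) is the problem of finding the minimal-norm element of a closed affine subset of $L^{2}({\mathbb R},{\mathrm e}^{-a^{2}t^{2}})$; by the projection theorem the minimiser $q^{\ast}$ is the unique such polynomial of degree $N$ that is orthogonal in $L^{2}({\mathbb R},{\mathrm e}^{-a^{2}t^{2}})$ to every polynomial of degree $\le N-1$ (equivalently, this is the normal-equation characterisation of~(\ref{minb})), hence a scalar multiple of the degree-$N$ orthogonal polynomial for the weight ${\mathrm e}^{-a^{2}t^{2}}$, which by~(\ref{orth}) and the substitution $s=at$ equals $H_{N}(a\,\cdot)$. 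Matching leading coefficients gives $q^{\ast}(t)=\bigl(-\tfrac{1}{2\sigma a}\bigr)^{N}H_{N}(at)$.

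To recover $\mathbf b$, put $\tau=t/\sqrt{2\sigma}$ and $\beta:=a\sqrt{2\sigma}$, so $\bigl(-\tfrac{1}{2\sigma a}\bigr)^{N}H_{N}(\beta\tau)=q^{\ast}(\sqrt{2\sigma}\,\tau)=\sum_{k=0}^{N}b_{k}(-1)^{k}(2\sigma)^{-k/2}H_{k}(\tau)$. Expanding the left-hand side in the Hermite basis via the scaling identity $H_{n}(\gamma x)=\sum_{j=0}^{\lfloor n/2\rfloor}\binom{n}{2j}\tfrac{(2j)!}{j!}\gamma^{\,n-2j}(\gamma^{2}-1)^{j}H_{n-2j}(x)$ and matching coefficients of $H_{k}(\tau)$ yields $b_{k}=0$ for $N-k$ odd and, for $N-k$ even,
\[ b_{k}=\binom{N}{k}\frac{(N-k)!}{((N-k)/2)!}\left(\frac{\beta^{2}-1}{(2\sigma a)^{2}}\right)^{(N-k)/2}=\frac{N!}{k!\,((N-k)/2)!}\left(\frac{\beta^{2}-1}{(2\sigma a)^{2}}\right)^{(N-k)/2}. \]
Since $\beta^{2}=2\sigma a^{2}=\tfrac{2\rho+\sigma}{\rho}$, one gets $\beta^{2}-1=\tfrac{\rho+\sigma}{\rho}$ and $(2\sigma a)^{2}=\tfrac{2\sigma(2\rho+\sigma)}{\rho}$, so the radicand equals $\tfrac{\rho+\sigma}{2\sigma(2\rho+\sigma)}$, which is exactly~(\ref{bktilde}). (Alternatively one may bypass the scaling identity by evaluating the Gaussian-weighted inner products of $H_{N}(a\cdot)$ against the $H_{j}(\cdot/\sqrt{2\sigma})$ directly, but the above is the most economical route.)

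For the closed form of $P_{N}$, abbreviate $c:=\tfrac{\rho+\sigma}{2\sigma(2\rho+\sigma)}$, so that~(\ref{bktilde}) reads $b_{k}=\tfrac{N!}{k!((N-k)/2)!}c^{(N-k)/2}$; note that the two radicands in the claimed formula for $P_{N}$ multiply to $\tfrac14$, so the inner scaling constant is $\tfrac{1}{2\sqrt c}$. Inserting $b_{k}$ into $P_{N}(\lambda)=\sum_{k=0}^{N}b_{k}\lambda^{k}$, expanding the candidate $\bigl(-{\mathrm i}\sqrt c\bigr)^{N}H_{N}\bigl(\tfrac{{\mathrm i}}{2\sqrt c}\lambda\bigr)$ by the monomial representation~(\ref{mono}), and comparing the coefficients of $\lambda^{k}$ (for $N-k$ even), the claimed identity reduces, after cancelling the common factor $\tfrac{N!}{k!((N-k)/2)!}$, to $(-1)^{(N-k)/2}(-{\mathrm i})^{N}{\mathrm i}^{\,k}=1$; this follows on writing $N+k=2m$ (even, since $N-k$ is even) and using ${\mathrm i}^{2m}=(-1)^{m}$ together with $N=2m-k$. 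As a by-product, $P_{N}(\lambda)=0\iff H_{N}\bigl(\tfrac{{\mathrm i}}{2\sqrt c}\lambda\bigr)=0$, so the zeros of $P_{N}$ are $\lambda_{j}=-2{\mathrm i}\sqrt c\,x_{j}$ where $x_{1},\dots,x_{N}$ are the (real, simple, symmetric) zeros of $H_{N}$; hence $P_{N}$ has $N$ simple, purely imaginary zeros, symmetric about $0$.

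The conceptual core — projecting onto the lower-degree polynomials forces the scaled Hermite polynomial — is short; the main effort will be keeping the several scaling constants $a$, $\beta$, $\sqrt c$ and the signs consistent across the Hermite monomial and scaling identities. Existence and uniqueness of the minimiser require no separate argument, since $F_{\rho}$ is a positive-definite quadratic form in $\mathbf b$: the system $\{\tfrac{{\mathrm d}^{k}}{{\mathrm d}t^{k}}{\mathrm e}^{-\cdot^{2}/2\sigma}\}_{k=0}^{N}$ — equivalently $\{H_{k}\}_{k=0}^{N}$ — is linearly independent in $L^{2}({\mathbb R},\rho)$.
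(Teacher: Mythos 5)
Your proposal is correct and follows essentially the same route as the paper: you identify the minimizer as the scaled Hermite polynomial $H_N(a\,\cdot)$ via orthogonality in the Gaussian-weighted $L^2$ space (the paper does this by an explicit Hermite-basis expansion and Parseval, you by the equivalent projection-theorem characterization), then extract $b_k$ from the same Hermite scaling identity (\ref{scal}) and verify the closed form of $P_N$ by the same comparison with the monomial representation (\ref{mono}). The constants $\beta^2-1=\tfrac{\rho+\sigma}{\rho}$, $(2\sigma a)^2=\tfrac{2\sigma(2\rho+\sigma)}{\rho}$ and the sign bookkeeping all check out, so no gap remains.
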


\begin{proof} 
1. From (\ref{2.5}) it follows  with $b_N=1$, $\tau := ( \frac{1}{\sigma} + \frac{1}{2\rho})^{1/2} t $ and $c:= (2+ \frac{\sigma}{\rho})^{-1/2}$ that 
\begin{align} \nonumber
F_\rho({\mathbf b}) &=  \textstyle \int\limits_{-\infty}^{\infty}  |D_{N} ({\mathrm e}^{-t^{2}/2\sigma}) |^{2} \, {\mathrm e}^{-t^{2}/2\rho} \, \mathrm{d}t =  \textstyle \int\limits_{-\infty}^{\infty} \Big| \sum\limits_{k=0}^{N} (-1)^{k} (2\sigma)^{-\frac{k}{2}} b_{k} \, H_{k}\Big( \frac{t}{\sqrt{2\sigma}} \Big) \Big|^{2} \, {\mathrm e}^{-t^{2}( \frac{1}{\sigma} + \frac{1}{2\rho})} \, {\mathrm d}t \\
\label{Fbrho}
&= \textstyle c \sqrt{2\sigma}  \int\limits_{-\infty}^{\infty} | \sum\limits_{k=0}^{N} (-1)^{k} (2\sigma)^{-\frac{k}{2}}\,   b_{k} \, H_{k}(c \tau) |^{2} \, {\mathrm e}^{-\tau^{2}} \, {\mathrm d} \tau .
 \end{align}
 Now, 
 $ q_N(\tau ) := \sum_{k=0}^{N} b_{k} \, (-1)^{k} (2\sigma)^{-\frac{k}{2}}   \, H_{k}(c \tau) $
 is a polynomial  of degree $N$ with leading  coefficient $(-2c)^N (2\sigma)^{-N/2}$. Therefore, it can  be rewritten in the basis of Hermite polynomials
 $$ \textstyle q_N(\tau) =  \Big(\frac{-\sqrt{2}c}{\sqrt{\sigma}}\Big)^N \sum\limits_{\ell=0}^N  \frac{\beta_{\ell}}{2^{N}} H_{\ell}(\tau), $$
 where $\beta_N=1$, since $H_{N}$ has  the leading coefficient $2^{N}$.  With this representation  we obtain from (\ref{Fbrho})
 \begin{align*}
F_\rho({\mathbf b}) &= \textstyle c \sqrt{2\sigma}  \int\limits_{-\infty}^{\infty} | \Big( \frac{- c}{\sqrt{2 \sigma}}\Big)^N  \sum\limits_{\ell=0}^N  \beta_{\ell} H_{\ell}(\tau) |^{2} \, {\mathrm e}^{-\tau^{2}} \, {\mathrm d} \tau  \\
&= \textstyle \Big( \frac{c}{\sqrt{2\sigma}}\Big)^{2N+1}  (2\sigma) \sum\limits_{\ell=0}^N |\beta_{\ell}|^2 \, \int\limits_{-\infty}^{\infty} |H_{\ell}(\tau)|^2 \, {\mathrm e}^{-\tau^{2}} d\tau 
=  \Big(\frac{c}{\sqrt{2\sigma}}\Big)^{2N+1}  (2\sigma) \sum\limits_{\ell=0}^N |\beta_{\ell}|^2  \sqrt{\pi} \, 2^{\ell} \,  \ell!,
\end{align*}
where we have used (\ref{orth}). Therefore, $F_\rho({\mathbf b})$ is minimal if $\beta_{{\ell}}=0$ for $\ell=0, \ldots , N-1$, such that 
\begin{equation}\label{mini}  F_\rho({\mathbf b}) = \min_{\tilde{\mathbf b} \in {\mathbb C}^N} F_\rho(\tilde{\mathbf b}) = c^{2N+1} (2\sigma)^{-N+1/2} \sqrt{\pi} \, 2^{N}\, N! = c^{2N+1} \sigma^{-N+1/2} \sqrt{2\pi} \, N!
\end{equation}
is achieved for $q_N(\tau) = (- \frac{c}{\sqrt{2\sigma}})^N  H_N(\tau)$.  Thus, the definition of $q_N$ implies
$$ \textstyle  H_N(\tau) = \Big( \frac{-\sqrt{2\sigma}}{c}\Big)^{N} \sum\limits_{k=0}^N {b}_k (-1)^k (2\sigma)^{-\frac{k}{2}} \, H_k(c\tau), $$
or equivalently,
\begin{equation}\label{Hscale}
 \textstyle H_N\Big( \frac{\tau}{c} \Big) = \frac{1}{c^N} \sum\limits_{k=0}^N {b}_k (-1)^{N-k} (2\sigma)^{\frac{N-k}{2}} H_k(\tau) 
=  \frac{1}{c^N} \sum\limits_{k=0}^N {b}_{N-k} (-1)^{k} (2\sigma)^{\frac{k}{2}} H_{N-k}(\tau),
\end{equation}
  i.e., the minimizer  ${\mathbf b}=(b_0, \ldots, b_{N-1})^T$ is determined by this expansion.
 
\noindent 
2. We observe that the Hermite polynomials  satisfy the scaling property
\begin{equation}\label{scal} 
\textstyle H_{N}(a \tau) = \sum\limits_{r=0}^{\lfloor N/2 \rfloor} \frac{N!}{r! (N-2r)!}  (a^{2}-1)^{r} \, a^{N-2r} \, H_{N-2r}(\tau), \qquad  N\in \mathbb{N},
\end{equation}
for $a \in {\mathbb R}$,
see e.g. \cite[formula (4.16)]{AW84}.  For $a=c^{-1}$,  comparison with (\ref{Hscale}) yields ${b}_{N-k}=0$ for odd $k$, and 
for $k=2r$,
$$ \textstyle  {b}_{N-2r} = \Big( \frac{1-c^2}{2\sigma} \Big)^r \frac{N!}{r!(N-2r)!} = \Big(\frac{\rho+ \sigma}{2\sigma(2\rho+\sigma)}\Big)^r \frac{N!}{r!(N-2r)!} .
$$

\noindent
3. Finally a comparison  of the characteristic polynomial $P_N(\lambda)$ in (\ref{P}) with ${\mathbf b}$ in (\ref{bktilde}) with (\ref{mono}) implies
\begin{align*}
{P}_{N}(\lambda)&= \textstyle \sum\limits_{r=0}^{\lfloor N/2 \rfloor} \frac{{N!}}{r! (N-2r)! } \left( \frac{\rho+\sigma}{2 \sigma(2\rho+\sigma)} \right)^{r}  \lambda^{N-2r} 
=   \left(-{\mathrm{i}} \, \sqrt{\frac{\rho+\sigma}{2\sigma(2\rho+\sigma)}}  \right)^{N} \,H_N\left(\mathrm{i} \, \sqrt{\frac{\sigma(2\rho+\sigma)}{2(\rho+\sigma)}} \, \lambda \right) .
\end{align*}

 \end{proof}
 
 \begin{remark}
1. Theorem \ref{theo1} shows that the differential approximation method leads to optimal frequency parameters $\lambda_{j}$, which are zeros of a scaled Hermite polynomial and can therefore be precomputed with high accuracy. It remains to solve the minimization problem (\ref{ming}) in the second step.
Note that if  ${\mathbf b}$ is defined as in (\ref{bktilde}), this does not mean that  $F_\rho({\mathbf b})$ decreases as $N$ increases, see (\ref{mini}).\\
2. For $\rho \to \infty$  we obtain the usual norm in $L^{2}({\mathbb R})$.
In this case,  we can derive from Theorem \ref{theo1} that the optimal frequency parameters obtained by this method are the zeros of $H_{N}(\sqrt{\sigma} {\mathrm i} \lambda)$.  \\
 3. The differential approximation method can also be applied to approximate other functions $f$ by short exponential sums. If  $f$ is sufficiently smooth, we can always determine the coefficients $b_j$ to minimize $\|D_N f\|_{L^{2}(\mathbb{R})}$ by applying the Fourier  transform and using the Parseval-Plancherel theorem, see \cite[P.230]{Bell}.
The problem  to determine $D_N(f)$ in (\ref{difoper}) is then equivalent to determining orthogonal polynomials on ${\mathbb R}$ with respect to the weight function $|\widehat{f}|^{2}$.
However, generally this method is very costly, since we first have to construct these orthogonal polynomials (using for example the Gram-Schmidt method) and then to compute their zeros. Only in the special case of Gaussians, we obtain the classical Hermite polynomials.
\end{remark}
 Theorem \ref{theo1} shows that the optimal frequencies $\lambda_{j}$, $j=1, \ldots, N$, are the zeros of the Hermite polynomial
$H_{N}\left( {\mathrm i} \sqrt{\frac{\sigma(2 \rho+\sigma)}{2(\rho+\sigma)}} \lambda \right)$. 
It is a trivial observation that for given zeros $t_j$, $j=1, \ldots , N$ of the Hermite polynomial $H_N(t)$, the scaled polynomial $H_{N}\left( {\mathrm i} \sqrt{\frac{\sigma(2 \rho+\sigma)}{2(\rho+\sigma)}} \lambda \right)$ has the zeros 
\begin{align}\label{zeros}  \textstyle \lambda_j =-{\mathrm i} \sqrt{\frac{2(\rho+\sigma)}{\sigma(2\rho+\sigma)}} t_j, \qquad j=1, \ldots , N.
\end{align}
 In particular, it follows that these zeros are all single zeros on the imaginary axis.
In the following, we always assume that the zeros $t_{j}$ of $H_{N}(t)$ are ordered by size, i.e., 
 $  t_{1} > t_{2} > \ldots  > t_{N}. $
Then, the symmetry of Hermite polynomials implies  that the zeros are symmetric with regard to zero, i.e., $t_{j} = -t_{N+1-j}$ for $j=1, \ldots , N$. Consequently,
we also have $\lambda_{j} = -\lambda_{N+1-j}$, $j=1, \ldots , N$.
As we will show, the obtained exponential sum to approximate the Gaussian ${\mathrm e}^{-t^{2}/2\sigma}$ is therefore a cosine sum.

\noindent
\textbf{Step 2.}  Having determined the frequencies $\lambda_j$, $j=1, \ldots , N$, on the imaginary axis,  we want to determine the vector  ${\gamra}=({\gamma}_{1}, {\gamma}_{2}, \ldots , {\gamma}_{N})^{T} \in {\mathbb C}^{N}$ of optimal coefficients satisfying 
\begin{equation}\label{minnorm}
{\gamra} :=  \textstyle \argmin\limits_{\tilde{\gamras} \in {\mathbb C}^{N}} F(\tilde{\gamra}) := \argmin\limits_{\tilde{\gamras} \in {\mathbb C}^{N}} \Big\| \mathrm{e}^{-\cdot^{2}/2\sigma} - \sum\limits_{j=1}^{N} \tilde{\gamma}_j \mathrm{e}^{\lambda_j \cdot} \Big\|_{L^{2}(\mathbb{R}, \rho )}^{2}\, .
\end{equation}
The minimization problem (\ref{minnorm}) is convex (see, for example, \cite[Chapter 1, §7]{Cbook}) and can be solved as described in \cite[Chapter 4, §1]{Cbook}. We obtain
\begin{align}
F(\gamra) =& \textstyle 
\int\limits_{-\infty}^{\infty} \mathrm{e}^{-t^{2} \left(\frac{1}{\sigma}+\frac{1}{2\rho}  \right)} \,  \mathrm{d} t -\sum_{j=1}^{N}  \gamma_{j} \int\limits_{-\infty}^{\infty} \mathrm{e}^{-t^{2} \left(\frac{1}{2\sigma}+\frac{1}{2\rho} \right) }  \mathrm{e}^{\lambda_j t} \, \mathrm{d} t \notag \\
& - \textstyle   \sum\limits_{m=1}^{N}  \overline{\gamma}_{m} \int\limits_{-\infty}^{\infty}  \mathrm{e}^{-t^{2} \left(\frac{1}{2\sigma}+\frac{1}{2\rho} \right) }  \mathrm{e}^{\overline{\lambda}_m t} \, \mathrm{d} t 
+ \sum\limits_{j=1}^{N} \sum\limits_{m=1}^{N}  \gamma_{j}  \overline{\gamma}_{m} \int\limits_{-\infty}^{\infty} \mathrm{e}^{-t^{2}/2\rho}  \mathrm{e}^{(\lambda_j+\overline{\lambda}_m ) t} \, \mathrm{d} t . \label{apper1}
\end{align}
We use the formula $\int_{-\infty}^{\infty}  \mathrm{e}^{-at^{2}+bt} \, \mathrm{d}t=\sqrt{\dfrac{\pi}{a}} \, \mathrm{e}^{\frac{b^{2}}{4a}}$ for $a>0$ and  introduce the notations
\begin{align}\label{funcG}
g_{j}  &:= \textstyle \int\limits_{-\infty}^{\infty} \mathrm{e}^{-t^{2} \left(\frac{1}{2\sigma}+\frac{1}{2\rho} \right) }  \mathrm{e}^{\lambda_j t} \, \mathrm{d} t =\sqrt{\frac{2\pi \sigma \rho}{\sigma+\rho}} \, \mathrm{e}^{\frac{\sigma \rho {\lambda}_{j}^{2} }{2(\sigma+\rho)}}, \\
\label{funcH}
H_{j,m}  &:= \textstyle \int\limits_{-\infty}^{\infty} \mathrm{e}^{-t^{2}/2\rho}  \, \mathrm{e}^{(\lambda_j+\overline{\lambda}_m ) t} \, \mathrm{d} t = \sqrt{2\pi \rho} \, \mathrm{e}^{(\lambda_{j}+\overline{\lambda}_{m})^2 \rho/2} = 
 \sqrt{2\pi \rho} \, \mathrm{e}^{(\lambda_{j}-{\lambda}_{m})^2 \rho/2}.
\end{align}
  Since all frequencies $\lambda_{j}$, $j=1, \ldots , N$, in (\ref{zeros}) are purely imaginary, it follows that $g_j$ and $H_{j,m}= H_{m,j}$ are real.  Taking into account that $\int_{-\infty}^{\infty} \mathrm{e}^{-t^{2} \left(\frac{1}{\sigma}+\frac{1}{2\rho}  \right)} \,  \mathrm{d} t  = \sqrt{\frac{2\pi \rho \sigma}{2\rho + \sigma}}$,  we get
\begin{align*}
F(\gamra) =&  \textstyle \sqrt{\frac{2\pi \rho \sigma}{2\rho + \sigma}}  - \sum\limits_{j=1}^{N} g_j \, (\gamma_j + \overline{\gamma}_j) + \sum\limits_{j=1}^{N}  \sum\limits_{m=1}^{N} H_{j,m} \, \gamma_j \overline{\gamma}_m.
 \end{align*}
Assuming that $\gamma_{j} = \alpha_{j} + {\mathrm i} \beta_{j}$ for $j=1, \ldots , N$, we obtain 
\begin{align}\label{Ferror} 
F(\gamra) =  \textstyle \sqrt{\frac{2\pi \rho \sigma}{2\rho + \sigma}} - 2 \sum\limits_{j=1}^{N} g_j \, \alpha_{j}
+ \sum\limits_{j=1}^{N}\sum\limits_{m=1}^{N} H_{jm} (\alpha_{j}\alpha_{m} + \beta_{j} \beta_{m}).
\end{align}
For the vector $\gamra$ in (\ref{minnorm}) that minimizes the functional $F$ we obtain the necessary conditions
\begin{align*}
\textstyle \frac{\partial  F(\gamras)}{\partial \alpha_{\ell}} &= \textstyle - 2 g_{\ell} + 2 \sum\limits_{m=1}^{N} H_{\ell m}  \alpha_{m} =0,  \qquad 
\textstyle \frac{\partial  F(\gamras)}{\partial \beta_{\ell}} =  2 \sum\limits_{m=1}^{N} H_{\ell m}  \beta_{m} =0,  \quad \ell=1, \ldots , N.
\end{align*}
In matrix vector representation with ${\mathbf H}_N = (H_{jm})_{j,m=1}^N$, ${\mathbf g} := (g_{j})_{j=1}^{N}$, ${\gamra} = \balpha + {\mathrm i} \berta= (\gamma_{j})_{j=1}^{N}$ it follows that 
$ {\mathbf H}_N \balpha = {\mathbf g}$ and ${\mathbf H}_N \berta ={\mathbf 0}. $
For $N>1$, the coefficient matrix ${\mathbf H}_N$ is real, symmetric and positive definite, since for any vector ${\mathbf x} \in {\mathbb R}^N \setminus \{ {\mathbf 0} \}$ we have 
\begin{align*} {\mathbf x}^T {\mathbf H}_N {\mathbf x} &= \textstyle 
\sqrt{2 \pi \rho} \sum\limits_{j=1}^N \sum\limits_{m=1}^N {x}_j {\mathrm e}^{-|\mathrm{Im} \lambda_j - \mathrm{Im}\lambda_m|^2 \rho/2} x_m  \\
& = \textstyle \sqrt{2 \pi \rho} \sum\limits_{j=1}^N \sum\limits_{m=1}^N {x}_j {\mathrm e}^{-|\lambda_j |^2 \rho/2}  x_m {\mathrm e}^{-|\lambda_m |^2 \rho/2}  \Big( \sum\limits_{\ell=0}^{\infty} \frac{\rho^\ell}{\ell!} (\mathrm{Im}\lambda_j)^\ell (\mathrm{Im} \lambda_m)^\ell \Big)\\
&= \textstyle  \sqrt{2 \pi \rho} \sum\limits_{\ell=0}^{\infty} \frac{\rho^\ell}{\ell!} \Big(  \sum\limits_{j=1}^{N} {x}_j (\mathrm{Im}\lambda_j)^\ell {\mathrm e}^{-|\lambda_j |^2 \rho/2}  \Big)\Big( \sum\limits_{m=1}^N {x}_m (\mathrm{Im} \lambda_m)^\ell {\mathrm e}^{-|\lambda_m |^2 \rho/2}  \Big) >0.
\end{align*}
Thus, $\berta ={\mathbf 0}$, i.e.,  the minimizing vector ${\gamra}$ is real and satisfies 
$ {\mathbf H}_N {\gamra} = {\mathbf g}$.
From $\lambda_{N+1-j} = -\lambda_{j}$ for $j=1, \ldots, N$, it follows that ${\mathbf J}_{N} {\mathbf H}_N {\mathbf J}_{N}={\mathbf H}_N $ and ${\mathbf J}_N {\mathbf g} = {\mathbf g}$, where ${\mathbf J}_{N}= (\delta_{j,N+1-k})_{j,k=1}^{N} $ denotes the counter identity.  Since we have on the one hand 
${\mathbf H}_N {\gamra} = {\mathbf g}$,
and on the other hand
$$  {\mathbf H}_N ({\mathbf J}_{N} \gamra) =  ({\mathbf J}_{N} {\mathbf H}_N {\mathbf J}_{N}) ({\mathbf J}_{N} \gamra) = {\mathbf J}_{N} {\mathbf g} = {\mathbf g}, $$
we can conclude 
that ${\gamra} = {\mathbf J}_{N} {\gamra}$, i.e., $\gamma_{j} = \gamma_{N+1-j}$ for $j=1, \ldots , N$.
To solve the system ${\mathbf H}_N {\gamra} = {\mathbf g}$ in a stable way, we can employ a Cholesky decomposition with pivot.

\begin{algorithm}[ht]
\caption{ Differential Prony-type method for approximation $\mathrm{e}^{-t^{2}/2\sigma}$  in  $L^{2}(\mathbb{R},\rho)$ using precomputed zeros of Hermite polynomials}
\label{alg11}
\small{
\textbf{Input:} parameters $\sigma, \rho>0$, $N \in \nn$ the order of the exponential sum;\\
\phantom{\textbf{Input:}} precomputed zeros $t_{1} > t_{2} > \ldots  > t_{N}$ of the Hermite polynomial $H_N(t)$.

\begin{enumerate}
\item Compute the frequencies $\lambda_j := -{\mathrm i} \sqrt{\frac{2(\rho+\sigma)}{\sigma(2\rho+\sigma)}} t_j$, $j=1,\ldots,N$.

\item Compute coefficients  $\gamma_{j }$, $j=1,\ldots,N$ as the solutions of the system:
$$ \textstyle 
\sum\limits_{j=1}^{N}  \mathrm{e}^{(\lambda_{j} - \lambda_{k})^2 \rho/2} \gamma_j =\sqrt{\frac{ \sigma }{\sigma+\rho}} \, \mathrm{e}^{\frac{\sigma \rho \lambda_{k}^{2} }{2(\sigma+\rho)}}, \ k=1,\ldots,N.
$$
\end{enumerate}
\textbf{Output:}   frequencies $\lambda_j$ with $\lambda_j= -\lambda_{N+1-j}$, coefficients  ${\gamma}_{j}$, with $\gamma_j= \gamma_{N+1-j}$ for $j=1,\ldots,N$ \\
 \phantom{\textbf{Output:}} to approximate $\mathrm{e}^{-t^{2}/2\sigma}$ by  $\sum_{j=1}^{N} \gamma_j \mathrm{e}^{\lambda_j t} $ in  $L^{2}(\mathbb{R},  \rho )$.}
\end{algorithm}

Note that the zeros of the Hermite polynomials can be pre-computed with high accuracy, see \cite{Salzer,Tow}.
The numerical effort of Algorithm \ref{alg11}  is governed by the computational cost to solve the linear system of $N$ linear equations with $N$ unknowns in step 2. This takes at 
 most $\mathcal{O}(N^{3})$ flops and can be further reduced to $\mathcal{O}(N^{2.376})$ (see  \cite{CW}).

For the error of the approximation we obtain from (\ref{Ferror}) (with ${\gamra} = \balpha$)
 and ${\mathbf H}_N \gamra={\mathbf g}$
\begin{align}
F({\gamra}) &= \textstyle  \Big\| \mathrm{e}^{-\cdot^{2}/2\sigma} - \sum\limits_{j=1}^{N} \gamma_j \mathrm{e}^{\lambda_j \cdot} \Big\|^{2}_{L^{2}(\mathbb{R}, \rho )} = \sqrt{\frac{2\pi \rho \sigma}{2\rho + \sigma}} - 2 {\mathbf g}^T {\gamra} + {\gamra}^T {\mathbf H}_N {\gamra}
=
 \textstyle  \sqrt{\frac{2\pi \rho \sigma}{2\rho + \sigma}} - {\mathbf g}^{T} {\mathbf H}_N^{-1} {\mathbf g}. \label{error1}
\end{align}
Application of Algorithm \ref{alg11}  provides an approximation error that decays exponentially if $\rho$ is sufficiently small, see
Figure \ref{fig1} for $\sigma=0.8$ and weights $\rho=1$, $\rho=2$.

\begin{figure}[h]
\centering
\begin{subfigure}{0.46\textwidth}
\includegraphics[width=\textwidth]{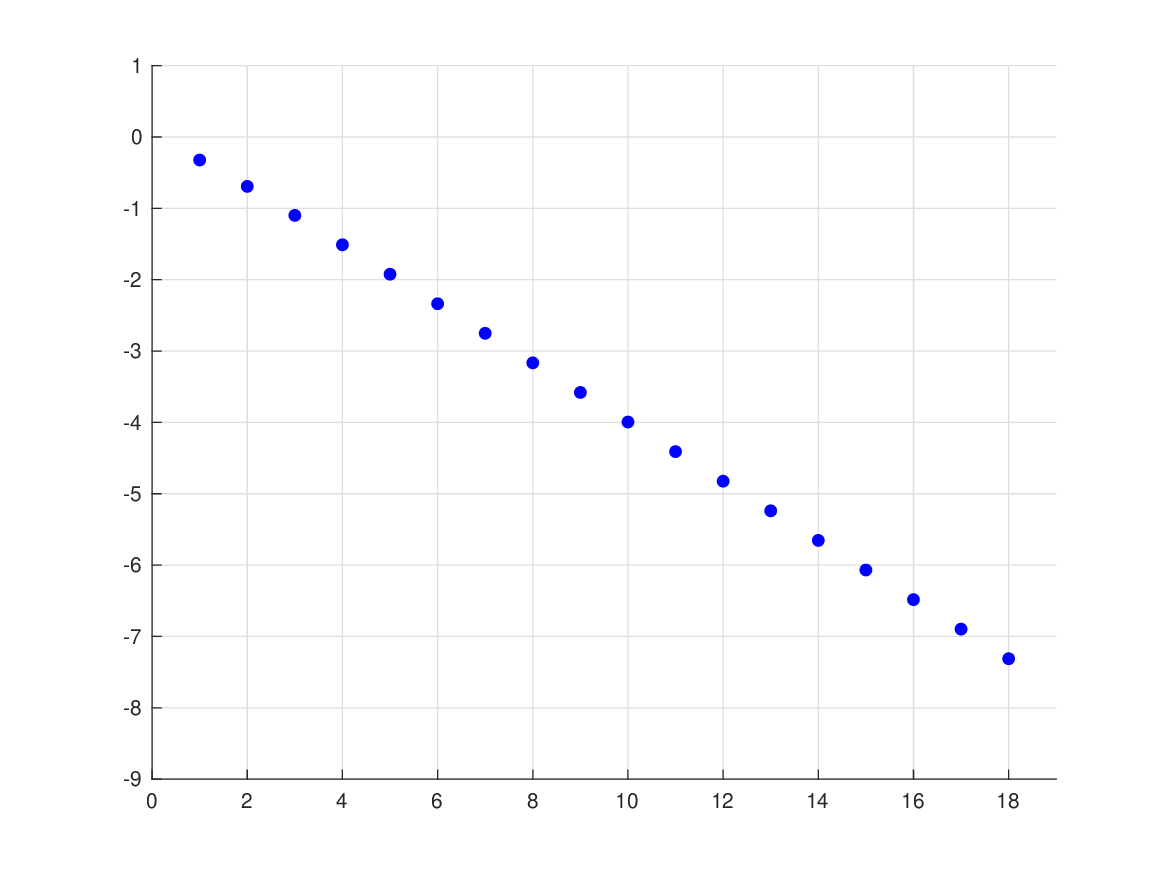}   
\end{subfigure}
\begin{subfigure}{0.46\textwidth}
\includegraphics[width=\textwidth]{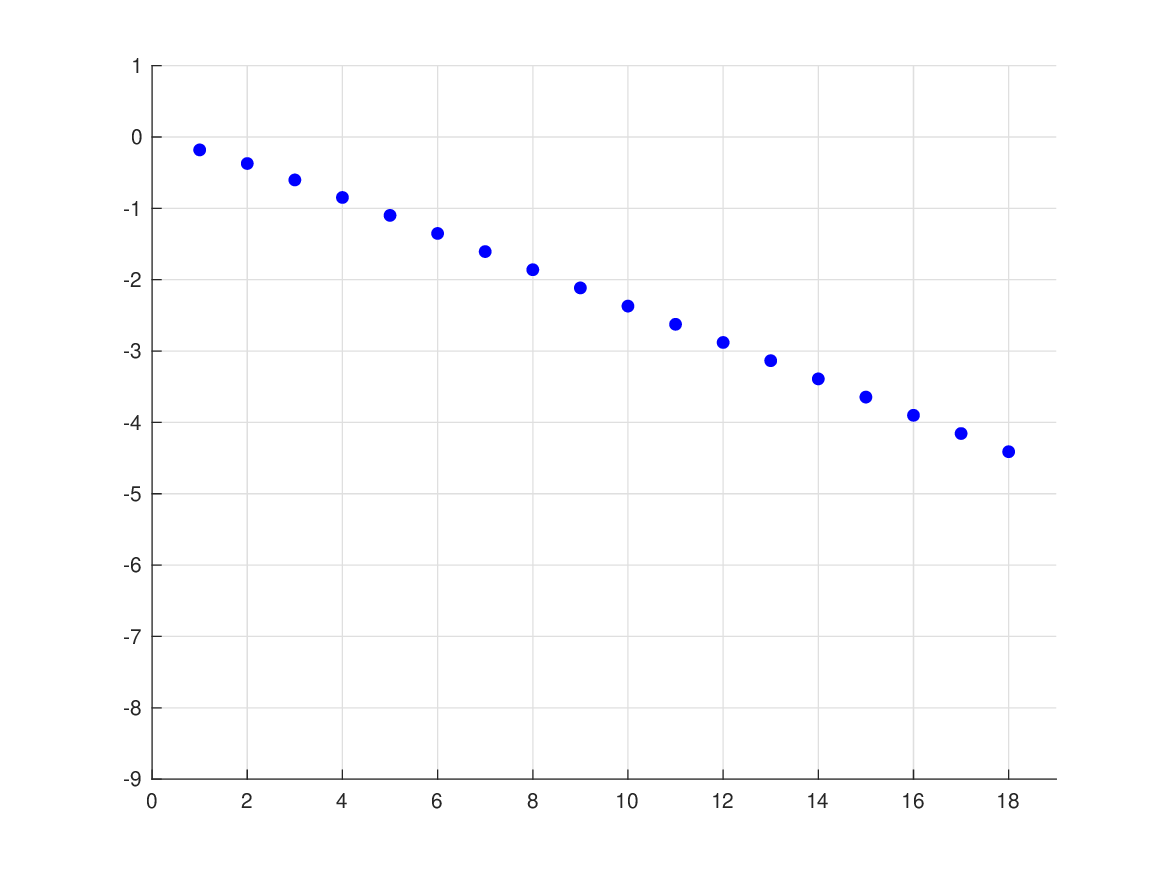}   
\end{subfigure}
\caption{Decay of approximation error in logarithmic scale with respect to $N=1,\ldots,18$, computed with Algorithm \ref{alg11} for $\sigma=0.8$ with $\rho=1$ (left) and $\rho=2$ (right).}
\label{fig1}
\end{figure}

\begin{remark}
1. For the minimization of $F_\rho({\mathbf b})$ in (\ref{Fbrho0}) we can also consider the limit case $\rho \to \infty$, i.e., we can replace the weighted $L^{2}$-norm by the usual $L^{2}$-norm. 
However, for the minimization of  (\ref{minnorm}) it is essential to employ $\rho<\infty$. The reason is obvious. Since the zeros of the Hermite polynomial $H_N$  are symmetric around zero and the coefficients $\gamma_j$ satisfy $\gamma_j= \gamma_{N+1-j}$, we indeed obtain a cosine sum to approximate ${\mathrm e}^{-t^2/2\sigma}$, i.e., 
$$ \textstyle {\mathrm e}^{-t^2/2\sigma} \approx 2 \sum\limits_{j=1}^{ \frac{N}{2}} \gamma_j \, \cos(|\lambda_j|t) \quad \text{or} \quad  {\mathrm e}^{-t^2/2\sigma} \approx \gamma_{(N+1)/2} + 2 \sum_{j=1}^{ \frac{N-1}{2}}  \gamma_j \, \cos(|\lambda_j|t) $$
for even and odd $N$, respectively.
This approximation is only meaningful in the weighted $L^{2}$ norm, i.e., if we multiply  both sides with  the Gaussian window function ${\mathrm e}^{-t^2/2\rho}$.

\noindent
2. Using the symmetry property $\gamma_j = \gamma_{N+1-j}$, the least squares problem in (\ref{minnorm}) can be rewritten. For $N$ even and $\gamra^{(s)}:=(\gamma_1, \ldots , \gamma_{\frac{N}{2}})^T$ we then have to solve
\begin{align*}
{\gamra}^{(s)} = \textstyle  \argmin\limits_{\tilde{\gamras}^{(s)} \in {\mathbb C}^{N}} \Big\|  \mathrm{e}^{-\cdot^{2}/2\sigma} -\sum\limits_{k=1}^{N/2}  \gamma_k \, \cos(\lambda_k \cdot) \Big\|_{L^{2}({\mathbb R}, \rho)}^{2}\, ,
\end{align*}
which leads to a linear system of size $\frac{N}{2}$. For odd $N$, a similar simplification applies.
\noindent
3. Note that  the coefficient vector  $\gamra =(\gamma_j)_{j=1}^N$ in (\ref{minnorm}), determined by ${\mathbf H} {\gamra} = {\mathbf g}$, only depends on the quotient $r := \rho/\sigma$, since  $(2\pi \rho)^{-1/2} {\mathbf H}$ as well as $(2\pi \rho)^{-1/2}{\mathbf g}$ only depend on $r$. 
This can be seen as follows.
Let again $t_1 > t_{2} > \ldots > t_{N}$ be the zeros of $H_N$ and $\lambda_j=-\mathrm{i}\sqrt{\frac{2(\rho+\sigma)}{(2\rho+\sigma)\sigma}} t_j$ for $j=1,\ldots,N$. 
Then, with $\rho=r\sigma$ we obtain for the components of $(2\pi \rho)^{-1/2}{\mathbf H}$ in (\ref{funcH}) that 
\begin{align}
	\textstyle  \mathrm{e}^{(\lambda_{j}-{\lambda}_{k})^2 \frac{\rho}{2}}= 
	  \mathrm{e}^{-{2}\frac{\rho+\sigma}{(2\rho+\sigma)\sigma}(t_j-t_k)^2 \frac{\rho}{2}}  =  \mathrm{e}^{-\frac{(r+1)r}{{2r+1}}(t_j-t_k)^2}   
	\label{Hjk}
\end{align}
and for the components of $(2\pi \rho)^{-1/2}{\mathbf g}$ in (\ref{funcG}),
\begin{align}
\textstyle \frac{1}{\sqrt{2\pi \rho}} g_{j} =	\textstyle \sqrt{\frac{ \sigma }{\sigma+\rho}} \, \mathrm{e}^{\frac{\sigma \rho {\lambda}_{j}^{2} }{2(\sigma+\rho)}} 
	 = \sqrt{\frac{ \sigma }{\sigma+\rho}} \, \mathrm{e}^{-\frac{\rho}{{2\rho+\sigma}} t_j^2} 
	 = \sqrt{\frac{ 1 }{1+r}} \, \mathrm{e}^{-\frac{r}{{2r+1}} t_j^2},
	 \label{gjneu}
\end{align}
such that the coefficients $\gamma_j$ are the solution of the linear system
\begin{align} \label{system1}
	 \textstyle \sum\limits_{j=1}^{N} {\gamma}_j \,  \mathrm{e}^{-\frac{(r+1)r}{2r+1}{(t_j-t_k)^2}}    = \sqrt{\frac{ 1 }{1+r}} \, \mathrm{e}^{-\frac{r}{{2r+1}}t_k^2}, \qquad k=1,\ldots,N.
\end{align}
\end{remark}

\subsection{Differential approximation method as a matrix pencil method}
\label{appreal}

Instead of using Theorem \ref{theo1}, we can solve the minimization problem ${\mathbf b} = \argmin_{\tilde{\mathbf b}} F_{\rho}(\tilde{\mathbf b})$ with the functional $F_{\rho}({\mathbf b}) $ in (\ref{Fbrho0}) directly. From (\ref{Fbrho}) it follows that 
\begin{align*}
F_{\rho}({\mathbf b}) &=  \textstyle c \sqrt{2\sigma}  \sum\limits_{j=0}^{N} \sum\limits_{m=0}^{N} (-1)^{j+m} (2\sigma)^{-(\frac{j+m}{2})} \, b_{j} \overline{b}_{m} \,  \int\limits_{-\infty}^{\infty} H_{j}(c\tau) \, H_{m}(c\tau) \, {\mathrm e}^{-\tau^{2}} d\tau \\
&= \textstyle \sum\limits_{j=0}^{N} \sum\limits_{m=0}^{N} A_{j,m} \, b_{j} \overline{b}_{m},
\end{align*}
where $c= (2+ \frac{\sigma}{\rho})^{{-1/2}} \neq 1$ and 
\begin{equation}\label{coefajk}
 A_{j,m} :=\textstyle c \sqrt{2\rho} (-1)^{j+m} \, (2\sigma)^{-(\frac{j+m}{2})} \int\limits_{-\infty}^{\infty} H_{j}(c\tau) \, H_{m}(c\tau) \, {\mathrm e}^{-\tau^{2}} d\tau. 
\end{equation}
The minimization of $F_{\rho}({\mathbf b})$ then yields the linear system
$$ \textstyle \sum\limits_{m=0}^{N} A_{j,m} \, \tilde{b}_{m} =0, \qquad j=0, \ldots , N-1. $$
or in matrix form, 
\begin{equation}\label{asysdif}
{\mathbf A}_{N,N+1}  \tilde{\mathbf{b}} =0.
\end{equation}
with 
${\mathbf A}_{N, N+1} = (A_{j,m})_{j,m=0}^{N-1,N}$, $\tilde{\mathbf b}= (\tilde{b}_{0}, \ldots , \tilde{b}_{N-1}, 1)^{T}$. Since we are interested in the zeros of the characteristic polynomial $P_{N}(\lambda) = \lambda^{N}+ \sum_{m=0}^{N-1} \tilde{b}_{k} \, \lambda^{m}$ as in (\ref{P}), we apply the matrix pencil method. 
We define the two matrices ${\mathbf A}_{N}(0)=\left(A_{j,m} \right)_{j,m=0}^{N-1}$
and  ${\mathbf A}_{N}(1)=\left(A_{j,m+1} \right)_{j,m=0}^{N-1}$.
Let the companion matrix ${\mathbf C}_N(\tilde{\mathbf{b}})$ of $P_{N}(\lambda)$  be given by 
\begin{equation}\label{commatrix}
{\mathbf C}_N(\tilde{\mathbf b}) =
\left( \begin{array}{ccccc}
0 & 0 & \ldots & 0& -\tilde{b}_0 \\
1 & 0& \ldots  &  0 & -\tilde{b}_1 \\
0 &  1 &  \ldots & 0&  -\tilde{b}_2 \\
\vdots & \vdots &  \vdots &\ddots & \vdots  \\
0 & 0 &  \ldots & 1  & -\tilde{b}_{N-1}
\end{array} \right)
\end{equation}
 with the property 
\begin{equation}\label{propcom}
\mathrm{det}(\lambda {\mathbf I}_N -{\mathbf C}_N(\tilde{\mathbf b}) )=P_N(\lambda).
\end{equation}
 Then (\ref{asysdif}) implies
$
{\mathbf A}_N(0){\mathbf C}_N(\tilde{\mathbf{b}}) ={\mathbf A}_N(1).
$
Taking into account (\ref{propcom}), we find the zeros $\lambda_1, \, \lambda_2,\ldots, \lambda_N$ of $P_N(\lambda)$ by computing the eigenvalues of the matrix pencil 
\begin{equation}\label{matpen}
\lambda {\mathbf A}_N(0)-{\mathbf A}_N(1).
\end{equation}
To improve the numerical stability of this computation we employ the singular value decomposition (SVD) of the matrix   ${\mathbf A}_{N,N+1}$ of the form 
\begin{equation}\label{svda}
{\mathbf A}_{N,N+1}  = {\mathbf U}_{N} \, {\mathbf D}_{N,N+1} {\mathbf W}_{N+1}
\end{equation} 
with  orthogonal matrices ${\mathbf U}_{N} \in {\mathbb R}^{N \times N}$  and  ${\mathbf W}_{N+1} \in {\mathbb R}^{(N+1) \times (N+1)}$.
Then, (\ref{matpen}) can be rewritten as 
\begin{equation}\label{matpen2}
\lambda \,  {\mathbf W}_{N}(0) - {\mathbf W}_{N}(1)
\end{equation}
with the submatrices ${\mathbf W}_{N}(0)={\mathbf W}_{N+1}(1:N,1:N) $  and ${\mathbf W}_{N}(1)={\mathbf W}_{N+1}(1:N,2:N+1)$, where we have used the usual Matlab notation for rows and columns.

\begin{algorithm}[ht]
\caption{Differential Prony-type method for approximation  $\mathrm{e}^{-t^{2}/2\sigma}$ in $L^{2}(\mathbb{R},  \rho )$  using matrix pencil approach}
\label{alg1}
\small{
\textbf{Input:} parameters $\sigma, \rho>0$, $N \in \nn$ the order of an exponential sum;

\begin{enumerate}
\item Create a matrix ${\mathbf A}_{N,N+1}$  with entries in (\ref{coefdef}) and compute the SVD as in (\ref{svda}). 

\item Compute the frequencies  $\lambda_1,\ldots ,\lambda_N$ as eigenvalues of   $\left(  {\mathbf W}_{N}(0)^{T} \right)^{\dagger}  {\mathbf W}_{N}(1)^{T},$
where $\left(  {\mathbf W}_{N}(0)^{T} \right)^{\dagger}$ denotes the Moore-Penrose inverse of  ${\mathbf W}_{N}(0)^{T}$.

\item Compute coefficients  $\gamma_{j }$, $j=1, \ldots ,N$ as the solutions of the system:
$$
\textstyle \sum\limits_{j=1}^{N}  \mathrm{e}^{(\lambda_{j}+\overline{\lambda}_{k})^2 \rho/2}  \gamma_j =\sqrt{\frac{ \sigma }{\sigma+\rho}} \, \mathrm{e}^{\frac{\sigma \rho \overline{\lambda}_{k}^{2} }{2(\sigma+\rho)}}, \qquad k=1,\ldots,N.
$$
\end{enumerate}

\noindent
\textbf{Output:} frequencies $\lambda_j$ 
$j=1,\ldots,N$; 
 coefficients  $\tilde{\gamma}_{j}$, 
 $j=1,\ldots,N$}  to approximate \\
 \phantom{\textbf{Output:}} $\mathrm{e}^{-t^{2}/2\sigma}$ by 
 $\sum_{j=1}^{N} \tilde\gamma_j \mathrm{e}^{\lambda_j t} $
 in the space $L^{2}(\mathbb{R},  \rho )$.
\end{algorithm}

What still remains is the computation of the entries $A_{j,m}$ of the matrix ${\mathbf A}_{N,N+1}$. We  use the following formula \cite[7.374(5)]{GR} 
\begin{align} \nonumber 
 & \textstyle \int\limits_{-\infty}^{\infty} \mathrm{e}^{-2\alpha^{2}t^{2}} H_j(t) H_m(t)\, \mathrm{d} t \\
 & \qquad = \textstyle 2^{\frac{m+j-1}{2}}
\alpha^{-m-j-1} (1-2\alpha^{2})^{\frac{m+j}{2}} \Gamma\left( \frac{m+j+1}{2} \right) \,  _2F_1\left(-m,j,\frac{1-m-j}{2},\frac{\alpha^{2}}{2 \alpha^{2}-1}\right), \label{int1}
\end{align}
 which holds for 
 $j+m$ even, $\alpha \neq \frac{1}{2}$, and 
 $\int_{-\infty}^{\infty} \mathrm{e}^{-2\alpha^{2}t^{2}} H_j(t) H_m(t)\, \mathrm{d} t =0$ for $j+m$  odd.
Here, $\Gamma$ denotes the Gamma function and $_2F_1$ is the hypergeometric function  defined by the series
$$
\textstyle _2F_1(a,b;c,z)= \sum\limits_{n=0}^{\infty} \frac{ (a)_n (b)_n}{(c)_n} \frac{z^{n}}{n!}, \ |z|<1,
$$
where  $(x)_n:=x(x+1) \ldots(x+n-1)$ is the Pochhammer symbol. 
Applying (\ref{int1}) to compute the integral  in (\ref{coefajk}),  we obtain the explicit representation
\begin{equation}\label{coefdef}
A_{j,m}= \textstyle 
(-1)^{\frac{m+j}{2}}  \sqrt{\frac{2 \rho \sigma}{2\rho+\sigma}}  \left( \frac{2(\rho+\sigma)}{\sigma(2\rho+\sigma)}  \right)^{\frac{m+j}{2}} \Gamma\left( \frac{m+j+1}{2} \right)   \,  _2F_1\left(-m,j,\frac{1-m-j}{2},\frac{2\rho+\sigma}{2(\rho+\sigma)} \right) 
\end{equation}
if $j+m$ is even and $A_{j,m}=0$ if $j+m$ is odd.
Interestingly,  these hypergeometric functions $_2F_1$ are also involved in the representation of the approximation error in \cite{DerPr2020}, where the problem of weighted $L^{2}(\mathbb{R}^{3},  \rho )$ approximation of $\mathrm{e}^{-\|t\|^{2}/2\sigma}$ in ${\mathbb R}^3$ by the spherical Gauss-Laguerre basis was considered.
The obtained algorithm is summarized in Algorithm  \ref{alg1}.

\begin{remark}\label{zeros1}
1.  As a corollary of Theorem \ref{theo1} and our observations in this section it follows that the eigenvalues of the matrix pencil (\ref{matpen})  with entries (\ref{coefdef}) are the zeros of  the Hermite polynomials $H_{N}\left( {\mathrm i} \sqrt{\frac{\sigma(2 \rho+\sigma)}{2(\rho+\sigma)}} \lambda \right)$. 
\end{remark}

Since the Gaussian decays exponentially  our method can also be applied for its approximation on a finite interval $[-T,T]$ with properly chosen $T$ (see also Theorem \ref{th37}).  In the last part of this section, we  show that for sufficiently large $T$, the  zeros of $H_{N}\left( {\mathrm i} \sqrt{\frac{\sigma(2 \rho+\sigma)}{2(\rho+\sigma)}} \lambda \right)$ are still suitable frequency parameters. 
To this end we rewrite 
the entries $A_{j,m}$ in (\ref{coefajk}) in a different form.
First observe that for $k \in {\mathbb N}$,  $\alpha >0$,
\begin{align} \nonumber
\textstyle \int\limits_{-\infty}^{\infty} \mathrm{e}^{-\alpha^{2} t^{2}} t^k \, \mathrm{d} t
&= \textstyle  (1+(-1)^k) \,  \int\limits_{0}^{\infty} \mathrm{e}^{-\alpha^{2} t^{2}} t^k \, \mathrm{d} t = \left(\frac{1}{\alpha} \right)^{k+1}  (1+(-1)^k) \,  \int\limits_{0}^{\infty} \mathrm{e}^{- t^{2}} t^k \, \mathrm{d} t   \\
&=\textstyle \frac{1}{2} \, \left(\frac{1}{\alpha} \right)^{k+1}  (1+(-1)^k) \,  \int\limits_{0}^{\infty} \mathrm{e}^{- t} \, t^{\frac{k+1}{2}-1} \, \mathrm{d} t 
\nonumber \\
&
= \textstyle \frac{1}{2} \left( \frac{1}{\alpha}\right)^{k+1} (1+(-1)^{k} )\, \Gamma\left(\frac{k+1}{2} \right), \label{form11}
\end{align}
where the definition  $\Gamma(z)=\int_0^{\infty} t^{z-1} \mathrm{e}^{-t} \, \mathrm{d}t$ of the Gamma function has been applied in the last step.
 Using now the explicit  representation (\ref{mono}) of Hermite  polynomials  in  (\ref{coefajk}),  we compute entries $A_{j,m}$ with (\ref{form11}) as 
\begin{align}\label{coefdef2nd}
A_{j,m}= \textstyle \sqrt{\frac{2 \sigma \rho}{2 \rho+\sigma}}    \left(\frac{2  \rho}{2 \rho+\sigma} \right)^{\frac{j+m}{2}} \sum\limits_{k=0}^{\lfloor j/2 \rfloor}  \sum\limits_{\ell=0}^{\lfloor m/2 \rfloor} \frac{(-1)^{k+\ell}  j! m! }{k! \ell! (j-2k)! (m-2\ell)! }  \left(\frac{2 \rho+\sigma}{4 \rho} \right)^{k+\ell} \Gamma\left( \frac{j+m-2k-2\ell+1}{2} \right)
\end{align}
if $j+m$ is even and $A_{j,m}=0$ if $j+m$ is odd.

\noindent

Let us now compute entries $A_{j,m}(T)$  which are defined as in (\ref{coefajk}) but instead of $(-\infty,\infty)$ we consider the interval $(-T,T)$. Similarly as above, we obtain for $a<0$, $b>0$, $k \in {\mathbb N}$, with the substitution $\eta t^{2}=t$,
\begin{align} \nonumber
& \textstyle   \int\limits_a^{b} \mathrm{e}^{-\eta \, t^{2}} t^{k} \, \mathrm{d}t
= \textstyle  \frac{1}{2\, {\eta}^{(k+1)/2}}  \left( (-1)^{k} \int\limits_{0}^{\eta \, a^{2}}  \mathrm{e}^{-t} t^{(k-1)/2} \, \mathrm{d}t+  \int\limits_0^{\eta \, b^{2}} \mathrm{e}^{-t} t^{(k-1)/2} \, \mathrm{d}t   \right)\\
\nonumber
& = \textstyle \frac{1}{2 \, {\eta}^{(k+1)/2}}  \Big( ( (-1)^{k}+1) \int\limits_{0}^{\infty}  \mathrm{e}^{-t} t^{\frac{k+1}{2}-1} \, \mathrm{d}t - (-1)^{k} \int\limits_{\eta \, a^{2}}^{\infty}  \mathrm{e}^{-t} t^{\frac{k+1}{2}-1} \, \mathrm{d}t -  \int\limits_{\eta \, b^{2}}^{\infty}  \mathrm{e}^{-t} t^{\frac{k+1}{2}-1} \, \mathrm{d}t  \Big)\\
&= \textstyle \frac{1}{2\, {\eta}^{(k+1)/2}}\left( (1+(-1)^{k}) \, \Gamma\left(\frac{k+1}{2} \right)-(-1)^{k} \Gamma\left(\frac{k+1}{2} ,\eta \, a^{2}\right) -\Gamma\left(\frac{k+1}{2} ,\eta \, b^{2}\right)  \right),\label{form1}
\end{align}
where  $\Gamma(z)$ is  the Gamma function and $\Gamma(z,a)=\int_a^{\infty} t^{z-1} \mathrm{e}^{-t} \, \mathrm{d}t$ is  the upper incomplete Gamma function for $\mathrm{Re} \, z>0$. 
Then the explicit representation of Hermite polynomials in (\ref{mono}) yields that $A_{j,m}(T)=0$ if $j+m$ is odd, while for  $j+m$ even we obtain
\begin{align}
A_{j,m}(T)=& \textstyle \sqrt{\frac{2 \sigma \rho}{2 \rho+\sigma}}    \left(\frac{2  \rho}{2 \rho+\sigma} \right)^{\frac{j+m}{2}} \sum\limits_{k=0}^{\lfloor j/2 \rfloor}  \sum\limits_{\ell=0}^{\lfloor m/2 \rfloor} \frac{(-1)^{k+\ell}  j! m! }{k! \ell! (j-2k)! (m-2\ell)! }  \left(\frac{2 \rho+\sigma}{4 \rho} \right)^{k+\ell} \notag \\
 & \textstyle \times 
\Big(  \Gamma\left( \frac{j+m-2k-2\ell+1}{2} \right) - \Gamma\left( \frac{j+m-2k-2\ell+1}{2}, \frac{2 \rho+\sigma}{2 \sigma \rho} T^{2} \right)\Big) . \label{ajmfin1}
\end{align}
The property $\lim\limits_{t \to \infty} \Gamma(z,t)=0$ of the incomplete Gamma function together with (\ref{coefdef2nd}) leads to 
$$
\lim\limits_{T\rightarrow \infty} A_{j,m}(T)=A_{j,m}.
$$
Taking into account Remark \ref{zeros1} we conclude that the zeros of the scaled Hermite polynomial $H_{N}\left( {\mathrm i} \sqrt{\frac{\sigma(2 \rho+\sigma)}{2(\rho+\sigma)}} \lambda \right)$ are also suitable to approximate the Gaussian in $[-T,T]$, if  $T$ is sufficiently large.

\begin{figure}[h]
\centering
\begin{subfigure}{0.46\textwidth}
\includegraphics[width=\textwidth]{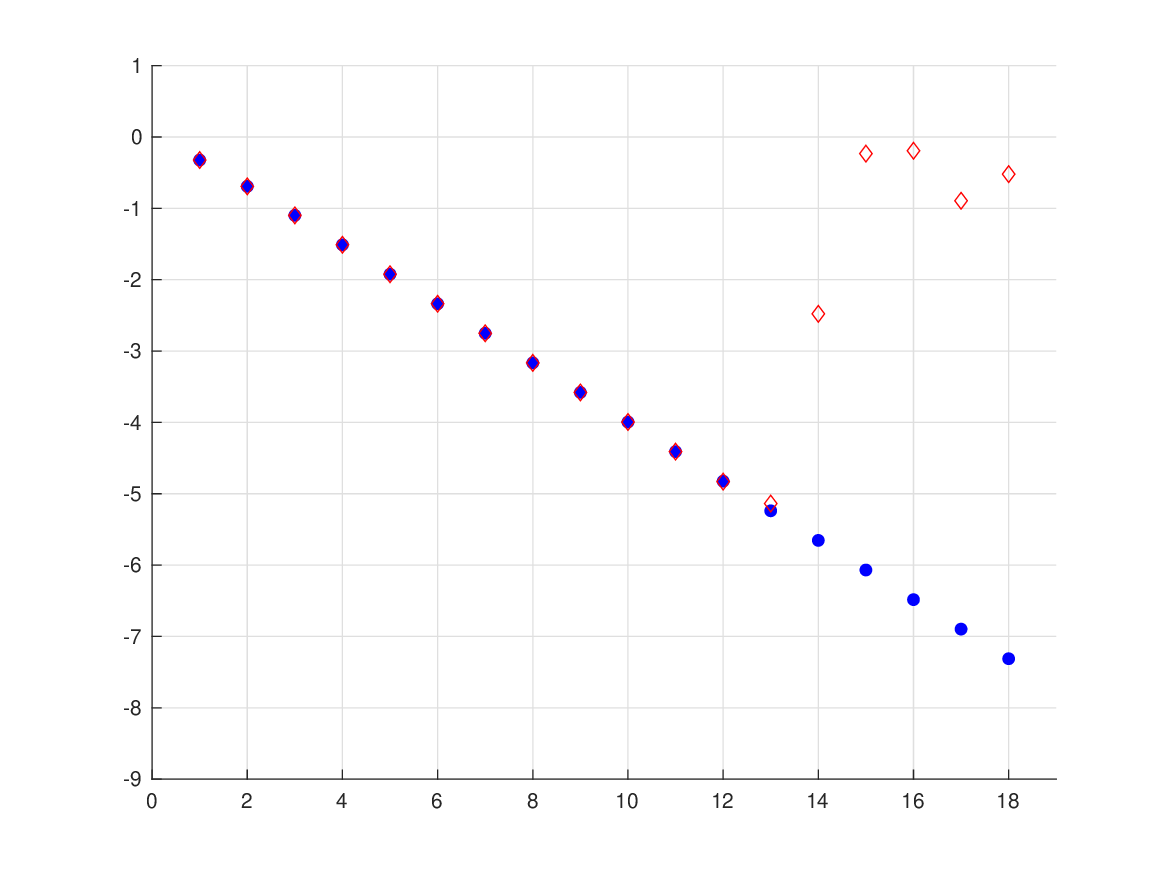}   
\end{subfigure}
\begin{subfigure}{0.46\textwidth}
\includegraphics[width=\textwidth]{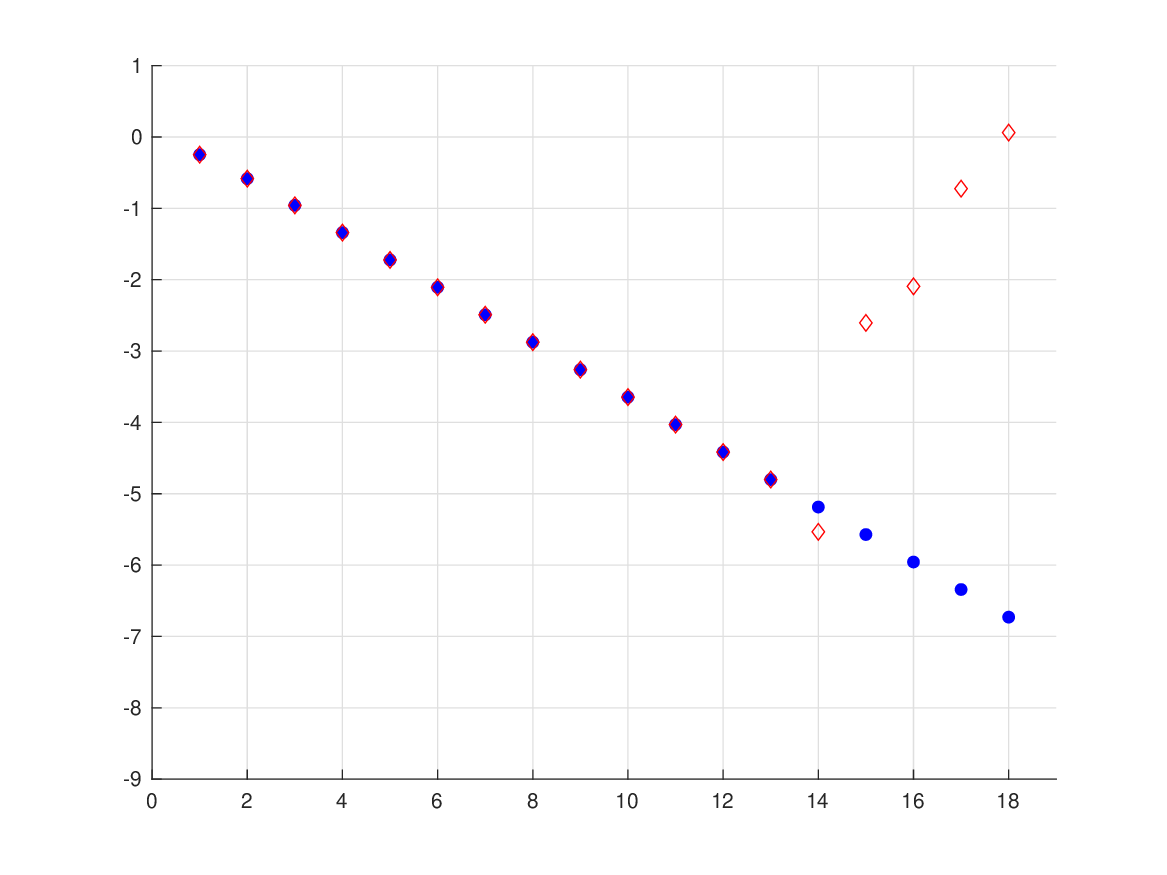}   
\end{subfigure}
\caption{Approximation error in logarithmic scale with respect to $N=1,\ldots,18$ computed with Algorithm \ref{alg11} (blue points) and Algorithm \ref{alg1} (red diamonds) for $\sigma=0.8$ with $\rho=1$ (left) and $\sigma=1.25$ with $\rho=1.75$ (right).}
\label{fig2}
\end{figure}


The overall computational cost of Algorithm \ref{alg1} is $\mathcal{O}(N^{3})$, which is  the complexity of the SVD of an $N\times (N+1)$ matrix computed in the 1st step of Algorithm \ref{alg1} and the numerical complexity to solve the matrix pencil problem in the second step of this algorithm. The linear system  in the 3rd step also takes at most $\mathcal{O}(N^{3})$ operations.

\begin{remark}\label{stab}
1.  Our numerical experiments (in double precision arithmetics) imply that Algorithm \ref{alg1} is less stable for larger $N$ compared to our new Algorithm \ref{alg11}, see Figure \ref{fig2}.
For $N\geq 14$, Algorithm \ref{alg1} requires high precision computations. \\
2. We have developed Algorithm \ref{alg11} particularly for approximation of Gaussian functions. It remains an open question whether  one can achieve approximations of other smooth functions with exponentially decaying error be replacing the frequency parameters resulting from eigenvalues of the matrix pencil by zeros of scaled orthogonal polynomials also in other cases.

\end{remark}



\section{Error estimates for approximation of the Gauss function}
\label{sec:error}

\subsection{Error estimate in the weighted $L^{2}$-norm}
In this section we will show that the approximation error $F({\gamra})$  in (\ref{minnorm}) and (\ref{error1}) decays exponentially with $N$ if $r = \rho/\sigma$ is chosen in a suitable range.
For the proof, we will use an explicit coefficient vector $\gamra$. 
Furthermore, we will employ the Gauss-Hermite quadrature formula, which possesses an exponential rate of convergence for special functions.

Our main result shows  exponential convergence of the approximation of the Gaussian by exponential sums.
\begin{theorem}\label{maintheorem}
Let the zeros of the Hermite polynomial $H_{N}(t)$ in $(\ref{mono})$ be denoted by $t_{1} > t_{2} > \ldots > t_{N}$.
For $\rho>0$ and $\sigma>0$ let $r := \frac{\rho}{\sigma}$ and   $\lambda_j = - {\mathrm i} \sqrt{\frac{2(\rho+\sigma)}{\sigma(2\rho + \sigma)}} t_j$, $j=1, \ldots , N$.
Moreover, let  ${\gamma}_{j}$ be  the coefficients obtained by Algorithm $\ref{alg11}$.
Then, the approximation error  $F({\gamra})$ in $(\ref{minnorm})$  is bounded by 
\begin{align*} \textstyle    \Big\| \mathrm{e}^{-\cdot^{2}/2\sigma} - \sum\limits_{j=1}^{N} \gamma_j \mathrm{e}^{\lambda_j \cdot} \Big\|_{L^{2}(\mathbb{R}, \rho )} 
< \textstyle c   \left(\frac{r}{\sqrt{2(2r+1)}}\right)^N \!  N^{3/4}< \textstyle c \,  \left(\frac{ 3^{1/4} \, r}{\sqrt{2(2r+1)}}\right)^N  ,
\end{align*}
where   $\gamma_j= \gamma_{N+1-j}$, $j=1, \ldots , N$.
The constant $c$ in the estimate depends on $\rho$ but  is independent of  $N$. 
Therefore,  we achieve   an exponential decay of the approximation error for  $r = \frac{\rho}{\sigma} < 2+ \sqrt{6}$.
\end{theorem}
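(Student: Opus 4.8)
The plan is to bound $F(\gamra)$ not by using the optimal coefficients from the linear system ${\mathbf H}_N \gamra = {\mathbf g}$, but by plugging in a clever \emph{suboptimal} choice of $\gamma_j$: the ones coming from the Gauss--Hermite quadrature rule whose nodes are exactly the $t_j$. Since $F$ is minimized by the true solution of the normal equations, any admissible choice of coefficients gives an upper bound on $F(\gamra)$. Concretely, I would write
\begin{align*}
\textstyle \Big\| \mathrm{e}^{-\cdot^2/2\sigma} - \sum_{j=1}^N \gamma_j \mathrm{e}^{\lambda_j \cdot} \Big\|_{L^2(\mathbb R,\rho)}^2
\le \textstyle \Big\| \mathrm{e}^{-\cdot^2/2\sigma} - \sum_{j=1}^N \tilde\gamma_j \mathrm{e}^{\lambda_j \cdot} \Big\|_{L^2(\mathbb R,\rho)}^2
\end{align*}
for the quadrature-based $\tilde\gamma_j$ referenced in formula~(\ref{gammaneu}), and then estimate the right-hand side directly. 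The point is that
$$\textstyle \big\| \mathrm{e}^{-\cdot^2/2\sigma} - \sum_{j} \tilde\gamma_j \mathrm{e}^{\lambda_j \cdot}\big\|_{L^2(\mathbb R,\rho)}^2 = \int_{-\infty}^\infty \big| \mathrm{e}^{-t^2/2\sigma} - \sum_j \tilde\gamma_j \mathrm{e}^{\lambda_j t}\big|^2 \mathrm{e}^{-t^2/2\rho}\, \mathrm dt,$$
and after the substitution that turns the weight into $\mathrm{e}^{-\tau^2}$ and rescales the frequencies so that $\lambda_j t$ becomes (up to a constant) $t_j \tau$, the integrand becomes the squared error of approximating a Gaussian-in-$\tau$ by a sum of the form $\sum_j \tilde\gamma_j \mathrm{e}^{\pm\text{(const)}\, t_j \tau}$. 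This is precisely the error of applying Gauss--Hermite quadrature to an analytic integrand, and the classical exponential bound for Gauss--Hermite quadrature applied to $\mathrm{e}^{c\tau}$-type functions gives the factor $(r/\sqrt{2(2r+1)})^N$.

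The key steps in order: (1) Recall from Remark~3 after Algorithm~\ref{alg11} and formulas~(\ref{Hjk})--(\ref{system1}) that everything depends only on $r=\rho/\sigma$, so I can rescale and assume $\sigma=1$, $\rho=r$ (or equivalently work throughout with the dimensionless variables $t_j$). (2) Make the substitution converting the weighted integral into a Gaussian weight $\mathrm{e}^{-\tau^2}$, expressing the error as an integral whose value equals the Gauss--Hermite quadrature error for a specific entire function $G(\tau)$ whose Taylor coefficients are explicit (it will be a Gaussian shifted into the complex plane, so a product of the form $\mathrm{e}^{-a\tau^2 + b\tau}$). (3) Invoke the exact remainder formula for Gauss--Hermite quadrature, $R_N[G] = \frac{N!\sqrt\pi}{2^N (2N)!} G^{(2N)}(\xi)$ for some $\xi\in\mathbb R$, or more precisely expand $G$ in Hermite polynomials and use that the quadrature is exact on polynomials of degree $\le 2N-1$, so only the tail $\sum_{k\ge 2N}$ of the Hermite expansion survives; estimate that tail using the orthogonality relation~(\ref{orth}) and the known decay of the Hermite coefficients of a Gaussian-type function. (4) Bound the resulting series — this is where the constant $\big(\frac{r}{\sqrt{2(2r+1)}}\big)^N$ and the polynomial correction $N^{3/4}$ come out, via Stirling's formula applied to the ratio $\frac{(\text{something})^N}{\sqrt{(2N)!}}\cdot(\text{coefficient bounds})$. (5) Finally, since $4^{-1/2}\cdot 3^{1/4}<1$ only constrains $r$, solve the inequality $\frac{3^{1/4} r}{\sqrt{2(2r+1)}} < 1$, i.e. $\sqrt3\, r^2 < 2(2r+1)$, which rearranges to $r^2 - \frac{4}{\sqrt3} r - \frac{2}{\sqrt3} < 0$; checking this is equivalent to $r < 2+\sqrt6$ is a short explicit computation with the quadratic formula.

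The main obstacle I anticipate is Step (4): getting the \emph{clean} exponent $\big(\frac{r}{\sqrt{2(2r+1)}}\big)^N$ with only an $N^{3/4}$ polynomial loss, rather than a cruder bound. This requires tracking the constants in the Hermite-coefficient expansion of the Gaussian-type integrand $\mathrm{e}^{-a\tau^2+b\tau}$ very carefully — using the generating-function identity $\sum_n H_n(\tau)\frac{z^n}{n!} = \mathrm{e}^{2\tau z - z^2}$ to read off these coefficients exactly, then applying Stirling to $\sqrt{2^k k!}$ and the binomial/factorial factors. A secondary subtlety is that the quadrature-based coefficients $\tilde\gamma_j$ are the Gauss--Hermite \emph{weights} (rescaled), and one must verify that $\sum_j \tilde\gamma_j \mathrm{e}^{\lambda_j t}$ really is the quadrature approximation of the right integral representation of $\mathrm{e}^{-t^2/2\sigma}$ — i.e. one needs the Gaussian identity $\mathrm{e}^{-t^2/2\sigma} = \text{const}\cdot\int_{-\infty}^\infty \mathrm{e}^{-\tau^2} \mathrm{e}^{\mathrm i c t \tau}\,\mathrm d\tau$ for the appropriate $c$, so that applying Gauss--Hermite to the $\tau$-integral produces exactly the exponential sum with frequencies $\lambda_j$. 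Once that identification is set up correctly, the rest is (careful) calculus. Everything after Step (4) — in particular the passage to the final $r<2+\sqrt6$ threshold — is routine.
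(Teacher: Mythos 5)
Your opening move is exactly the paper's: bound $F(\gamra)$ from above by evaluating the quadratic functional at the suboptimal coefficients built from the Gauss--Hermite weights, i.e.\ the vector in (\ref{gammaneu}). But the route you sketch for estimating the resulting error has a genuine gap at its centre. You want to realize $\sum_j \tilde\gamma_j \mathrm{e}^{\lambda_j t}$ as the Gauss--Hermite quadrature applied to an integral representation $\mathrm{e}^{-t^2/2\sigma}=\mathrm{const}\int \mathrm{e}^{-\tau^2}G_t(\tau)\,\mathrm{d}\tau$ with $G_t$ "a Gaussian shifted into the complex plane, of the form $\mathrm{e}^{-a\tau^2+b\tau}$''. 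If you force the nodes $t_j$ to produce the frequencies $\lambda_j=-\mathrm{i}\sqrt{\tfrac{2(\rho+\sigma)}{\sigma(2\rho+\sigma)}}\,t_j$ of (\ref{zeros}), a short computation shows the integrand must be $G_t(\tau)=\mathrm{e}^{\frac{r}{2r+1}\tau^2+\mathrm{i}\beta t\tau}$ with a \emph{positive} quadratic exponent (equivalently, the coefficients must carry the factor $\mathrm{e}^{\frac{r}{2r+1}t_j^2}$, which is precisely what (\ref{gammaneu}) does); a genuinely decaying integrand $\mathrm{e}^{-a\tau^2+b\tau}$ forces $\beta=\sqrt{2/\sigma}$ and hence frequencies that do not match $\lambda_j$ for finite $\rho$. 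For a growing $G_t$ the classical remainder bound (\ref{errorGH}) that you invoke is useless ($\max_{\xi\in\mathbb R}|G_t^{(2N)}(\xi)|=\infty$), and your fallback — truncate the Hermite expansion and "estimate the tail using orthogonality'' — does not work either, because the quadrature functional is not bounded by the $L^2(\mathrm{e}^{-\tau^2})$ norm: you must control the tail pointwise at the nodes, weighted by $\omega_j$, and you are led to sums of the type $\sum_k\omega_k\,\mathrm{e}^{t_k^2}$. That sum is exactly the technical crux of the paper (its Lemma \ref{lem2}, proved via the interlacing of zeros and extrema of Hermite functions, a Sonin-type monotonicity theorem, the identity $\sum_k t_k^2=N(N-1)/4$ and Stirling estimates), it cannot be handled by the quadrature formula itself (the corresponding integral diverges, as the paper remarks), and it is the sole source of the $N^{3/4}$ factor. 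Your outline has no counterpart for this step; "tracking constants via Stirling'' does not substitute for it. The paper avoids your obstruction by never interpreting the pointwise error as a quadrature remainder: it expands the quadratic form, computes ${\mathbf g}^T\gamra^{(H)}$ exactly, and recognizes only the entries of ${\mathbf H}_N\gamra^{(H)}$ as quadrature sums for the \emph{decaying} integrands $f_k(\tau)=\mathrm{e}^{-2s_1\tau^2+2s_0t_k\tau}$, for which the derivative remainder bound applies (Lemma \ref{lemma-Hermite}).

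A smaller but real error is your final step: the decay threshold does not come from $\frac{3^{1/4}r}{\sqrt{2(2r+1)}}<1$. That inequality gives $r<1+\sqrt3$, not $r<2+\sqrt6$. The correct condition is $s_1=\frac{r^2}{2(2r+1)}<1$, i.e.\ $\frac{r}{\sqrt{2(2r+1)}}<1$ (the $N^{3/4}$ factor being subexponential), and $r^2-4r-2<0$ is what yields $r<2+\sqrt6$; the bound containing $3^{1/4}$ is only a crude corollary obtained from $N^{3/4}\le 3^{N/4}$.
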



To prove Theorem \ref{maintheorem},  we  need the following notations and preliminary lemmas.

\noindent
We introduce the Gauss-Hermite quadrature formula of the form 
\begin{equation}\label{GH} \textstyle \sum\limits_{j=1}^{N} \omega_{j}\, f({t}_{j})  = \int\limits_{-\infty}^{\infty} {\mathrm e}^{-t^{2}}\,  f(t)\,  dt + R_{f}, 
\end{equation}
where $t_{1}> t_{2}> \ldots > t_{N}$ are the $N$ zeros of the  Hermite polynomial $H_{N}(t)$  and where $R_{f}$ denotes the error of the quadrature formula.
The weights $\omega_{j}$  in (\ref{GH}) are taken as 
\begin{equation}\label{GHweights} \textstyle \omega_{j} := \textstyle \frac{2^{N-1} N!\sqrt{\pi}}{N^{2} [H_{N-1}({t}_{j})]^{2}} = \frac{2^{N+1} N!\sqrt{\pi}}{ [H_{N+1}({t}_{j})]^{2}}, \qquad j=1, \ldots , N, 
\end{equation}
see e.g.  \cite{Szego}, formula (15.3.6), where we have used (\ref{rec}), i.e.,
$$H_{N+1}({t}_j) = 2{t}_j \, H_N({t}_j) - H_N'({t}_j)= -H_N'({t}_j)=-2N \, H_{N-1}({t}_j). $$
The Gauss-Hermite quadrature formula is exact for polynomials of degree up to  $2N-1$, and the error $R_{f}$ can be represented as 
\begin{equation}\label{errorGH}
 \textstyle |R_{f}|= \left| \sum\limits_{j=1}^{N} \omega_{j}\, f({t}_{j})  - \int\limits_{-\infty}^{\infty} {\mathrm e}^{-t^{2}}\,  f(t)\,  dt \right| \le \frac{N!}{(2N)!} \frac{\sqrt{\pi}}{2^{N}} \max_{\xi \in {\mathbb R}}|f^{(2N)}(\xi)|, 
\end{equation}
where $ f^{(2N)}= \frac{\mathrm{d}^{2N}}{\mathrm{d} t^{2N}} f$ denotes the $(2N)$-th derivative of $f$, see \cite{Hildebrandt}, formula (8.7.7).
\noindent
 For functions of the form $f_{k}(t) := {\mathrm e}^{-2s_{1}  t^{2}  +2s_{0} {t}_{k} t}$ with $s_0>0$, $s_1>0$, the error  $R_{f_k}$  can be estimated as follows.
\begin{lemma}\label{lemma-Hermite}
For $s_{0} >0$ and $s_{1} >0$ let $f_{k}(t) := {\mathrm e}^{-2s_{1}  t^{2}  +2s_{0} {t}_{k} t}$, where ${t}_{k}$, $k=1, \ldots , N$, denote the zeros of the Hermite polynomial $H_{N}(t)$ in $(\ref{rodfor})$. Then the error of the Gauss-Hermite quadrature formula for $f_{k}$ satisfies 
\begin{equation}\label{Hermite-error}
\textstyle \Big| \sum\limits_{j=1}^{N} \omega_{j} \, f_{k}({t}_{j})  - \int\limits_{-\infty}^{\infty} {\mathrm e}^{-t^{2}}\,  f_{k}(t)\,  dt \Big| <
 \sqrt{\pi}\,  s_{1}^{N} \, {\mathrm e}^{\frac{s_{0}^{2}}{{2}s_{1}} {t}_{k}^{2}}. 
 \end{equation}
\end{lemma}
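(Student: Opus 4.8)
The plan is to reduce the claim to the general Gauss--Hermite error estimate $(\ref{errorGH})$, for which it suffices to compute $\max_{\xi\in{\mathbb R}}|f_k^{(2N)}(\xi)|$ exactly. First I would complete the square in the exponent of $f_k$: with $c:=\frac{s_0}{2s_1}\,t_k$ one has $-2s_1 t^2 + 2 s_0 t_k t = -2 s_1(t-c)^2 + \frac{s_0^2}{2 s_1}t_k^2$, hence $f_k(t)={\mathrm e}^{\frac{s_0^2}{2 s_1}t_k^2}\,{\mathrm e}^{-2 s_1(t-c)^2}$. Substituting $w=\sqrt{2 s_1}\,(t-c)$ and using the Rodrigues formula $(\ref{rodfor})$ with $n=2N$ (so that $(-1)^{2N}=1$) yields the closed form
$$ f_k^{(2N)}(t) = {\mathrm e}^{\frac{s_0^2}{2 s_1}t_k^2}\,(2 s_1)^N\, H_{2N}\!\big(\sqrt{2 s_1}\,(t-c)\big)\,{\mathrm e}^{-2 s_1(t-c)^2}, $$
so that $\max_{\xi\in{\mathbb R}}|f_k^{(2N)}(\xi)| = {\mathrm e}^{\frac{s_0^2}{2 s_1}t_k^2}\,(2 s_1)^N\,\sup_{w\in{\mathbb R}}{\mathrm e}^{-w^2}|H_{2N}(w)|$.

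The key step is to show that the supremum on the right is attained at $w=0$, i.e.,
$$ \sup_{w\in{\mathbb R}}{\mathrm e}^{-w^2}\,|H_{2N}(w)| = |H_{2N}(0)| = \frac{(2N)!}{N!}, $$
where the last equality follows from the monomial representation $(\ref{mono})$ (only the term with $\ell=N$ survives at $w=0$). To prove it I would use that, by $(\ref{rodfor})$, ${\mathrm e}^{-w^2}H_{2N}(w)=\frac{{\mathrm d}^{2N}}{{\mathrm d}w^{2N}}{\mathrm e}^{-w^2}$, together with the Fourier representation ${\mathrm e}^{-w^2}=\frac{1}{2\sqrt{\pi}}\int_{{\mathbb R}}{\mathrm e}^{-\xi^2/4}{\mathrm e}^{{\mathrm i}w\xi}\,{\mathrm d}\xi$; differentiating under the integral sign and applying the triangle inequality gives ${\mathrm e}^{-w^2}|H_{2N}(w)|\le\frac{1}{2\sqrt\pi}\int_{{\mathbb R}}\xi^{2N}{\mathrm e}^{-\xi^2/4}\,{\mathrm d}\xi$, and the right-hand side is precisely the value of ${\mathrm e}^{-w^2}|H_{2N}(w)|$ at $w=0$; the inequality is moreover strict for $w\neq 0$ because ${\mathrm e}^{{\mathrm i}w\xi}$ is non-constant on the support ${\mathbb R}$ of $\xi^{2N}{\mathrm e}^{-\xi^2/4}$. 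Plugging this into $(\ref{errorGH})$ produces exactly
$$ |R_{f_k}| \le \frac{N!}{(2N)!}\,\frac{\sqrt\pi}{2^N}\,{\mathrm e}^{\frac{s_0^2}{2 s_1}t_k^2}\,(2 s_1)^N\,\frac{(2N)!}{N!} = \sqrt\pi\,s_1^N\,{\mathrm e}^{\frac{s_0^2}{2 s_1}t_k^2}. $$

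It remains to sharpen this to the strict inequality in $(\ref{Hermite-error})$. For this I would use the Peano-kernel form of the Gauss--Hermite quadrature error, $R_{f_k}=\int_{{\mathbb R}}K(t)\,f_k^{(2N)}(t)\,{\mathrm d}t$, where $K$ has constant sign and $\int_{{\mathbb R}}K(t)\,{\mathrm d}t = \frac{N!\sqrt\pi}{2^N(2N)!}$ (this is the quadrature error applied to $t^{2N}/(2N)!$, equal to $\frac{1}{(2N)!}\int_{{\mathbb R}}{\mathrm e}^{-t^2}(2^{-N}H_N(t))^2\,{\mathrm d}t$). Since $f_k^{(2N)}$ is a non-constant continuous function whose modulus attains its supremum only at the single point $t=c$, while $K\not\equiv 0$, the estimate
$$ |R_{f_k}|\le\int_{{\mathbb R}}|K(t)|\,|f_k^{(2N)}(t)|\,{\mathrm d}t<\Big(\sup_{t\in{\mathbb R}}|f_k^{(2N)}(t)|\Big)\int_{{\mathbb R}}|K(t)|\,{\mathrm d}t = \sqrt\pi\,s_1^N\,{\mathrm e}^{\frac{s_0^2}{2 s_1}t_k^2} $$
is strict. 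The main obstacle I expect is the identity $\sup_w{\mathrm e}^{-w^2}|H_{2N}(w)|=|H_{2N}(0)|$ and, more precisely, the fact that this supremum is attained \emph{only} at $w=0$ (which is what yields strictness); the sign-definiteness of the Peano kernel of Gauss--Hermite quadrature is the other technical input, though it is classical.
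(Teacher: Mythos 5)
Your proposal is correct, and its skeleton matches the paper's proof: reduce to the general Gauss--Hermite error bound (\ref{errorGH}), compute $f_k^{(2N)}$ in closed form via the Rodrigues formula (your completion of the square is the same computation the paper does with the chain rule), and then show $\max_{x\in\mathbb{R}}\mathrm{e}^{-x^2}|H_{2N}(x)|=|H_{2N}(0)|=\frac{(2N)!}{N!}$. Where you genuinely diverge is in how this last, key fact is established: the paper invokes a generalization of Sonin's theorem (Szeg\H{o}, Theorem 7.31.1), writing $y(x)=\mathrm{e}^{-x^2}H_{2N}(x)$ as a solution of $(k y')'+\phi y=0$ with $k\phi$ increasing, so that the relative maxima of $|y|$ on $[0,\infty)$ decrease and the global maximum sits at $x=0$; you instead use the Fourier representation of $\mathrm{e}^{-w^2}$, differentiate $2N$ times under the integral, and apply the triangle inequality, which is more elementary and self-contained and moreover shows the maximizer is \emph{unique}. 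That uniqueness feeds your second addition: a Peano-kernel argument (using the classical sign-definiteness of the Gauss quadrature kernel and $\int K=\frac{N!\sqrt{\pi}}{2^N(2N)!}$) to upgrade the bound to the strict inequality actually asserted in (\ref{Hermite-error}) --- a point the paper glosses over, since (\ref{errorGH}) as stated only yields ``$\le$''. So your route trades the one-line citation of Szeg\H{o} for a slightly longer but self-contained Fourier argument, and is in fact more careful about the strictness of the final estimate; both are valid, and the subsequent use of the lemma in Theorem \ref{maintheorem} would be unaffected either way.
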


\begin{proof}
Applying (\ref{rodfor}), the chain rule and the fact that $H_{2N}$ is even, we obtain
\begin{align*}
\textstyle f^{(2N)}_k(t) &
= \mathrm{e}^{\frac{s_{0}^{2} t_{k}^{2}}{2s_{1}}} 
\frac{\mathrm{d}^{2N}}{\mathrm{d} t^{2N}} \mathrm{e}^{-(\sqrt{2s_1}t-\frac{s_0{t}_k}{\sqrt{2s_1}})^2}  = \textstyle \mathrm{e}^{\frac{s_{0}^{2} t_{k}^{2}}{2s_{1}}} \, (2s_{1})^{N} \, \mathrm{e}^{-(\sqrt{2s_1}t-\frac{s_0{t}_k}{\sqrt{2s_1}})^2}H_{2N}(\sqrt{2s_1}t-\frac{s_0{t}_k}{\sqrt{2s_1}})\\
& = \textstyle (2s_1)^N  {\mathrm e}^{-2s_{1} t^{2} + 2 s_{0} {t}_{k} t} H_{2N}(\frac{-2s_1t+s_0{t}_k}{\sqrt{2s_1}}).
\end{align*}
Therefore,
\begin{align*} \textstyle 
\max\limits_{t \in {\mathbb R}} | f^{(2N)}_{k}(t)| = \textstyle  (2s_{1})^{N}  \max\limits_{x \in {\mathbb R}} \Big({\mathrm e}^{-x^{2}+\frac{s_{0}^{2}}{2s_{1}} {t}_{k}^{2}}  | H_{2N}(x)| \Big)
=  \textstyle (2s_{1})^{N}   {\mathrm e}^{\frac{s_{0}^{2}}{2s_{1}} {t}_{k}^{2}}   \max\limits_{x \in {\mathbb R}} \Big({\mathrm e}^{-x^{2}}   |H_{2N}(x) |\Big). 
\end{align*}
Since $({\mathrm e}^{-x^{2}}  \, H_{2N}(x))'=(-2x \, H_{2N}(x) + H_{2N}'(x)) {\mathrm e}^{-x^{2}} = - H_{2N+1}(x) \, {\mathrm e}^{-x^{2}}$, 
 the local extrema of ${\mathrm e}^{-x^{2}}  \, H_{2N}(x)$ occur at the zeros of $H_{2N+1}(x)$.
We show that the global maximum of ${\mathrm e}^{-x^{2}}  \, |H_{2N}(x)|$ is obtained at $x=0$.
For this purpose, we apply a generalization of the Theorem of Sonin, see \cite{Szego}, Theorem 7.31.1 and the corresponding footnote. This theorem says that, if a function $y(x)$ satisfies the differential equation
$$ (k(x) \, y'(x))' + \phi(x) \, y(x) =0, $$
where  $k(x)>0$ and $\phi(x)>0$ are continuously differentiable, then the relative maxima of $|y(x)|$ for $x\ge 0$ form  an increasing or decreasing  sequence according as $k(x) \phi(x)$ is decreasing or increasing.
We simply observe that $y(x) = {\mathrm e}^{-x^{2}}  \, H_{2N}(x)$ satisfies the above differential equation with $k(x) = {\mathrm e}^{x^{2}}$ and $\phi(x) = (4N+2) {\mathrm e}^{x^{2}}$. Since $k(x) \phi(x)$ is increasing for $x \ge 0$, it follows that the sequence of local maxima of ${\mathrm e}^{-x^{2}}  \, |H_{2N}(x)|$ decreases for $x\ge 0$ and  $N \in {\mathbb N}$. 
Taking into account that $y(x)$ is even and the point $x=0$ is one of the extrema of $y(x)$, we conclude that 
$$ \max_{x \in {\mathbb R}} |f^{(2N)}_{k}(t)| = \textstyle (2s_{1})^{N}  \, {\mathrm e}^{\frac{s_{0}^{2}}{2s_{1}} {t}_{k}^{2}} \,   |H_{2N}(0)| = (2s_{1})^{N}  \, {\mathrm e}^{\frac{s_{0}^{2}}{2s_{1}} {t}_{k}^{2}} \,   \frac{(2N)!}{N!}, $$
and  (\ref{errorGH}) finally implies  (\ref{Hermite-error}).
\end{proof}

\noindent
For the next estimate,  which is crucial for the proof of the convergence Theorem \ref{maintheorem}, we  employ the Hermite functions defined for $n \in {\mathbb N}_{0}$  by
\begin{align}\label{hermitef} \psi_{n}(t):=  \textstyle (2^{n} n! \sqrt{\pi})^{-\frac{1}{2}} \, {\mathrm e}^{-\frac{t^{2}}{2}} H_{n}(t) .
\end{align}


\begin{lemma} \label{lem2}
Let ${t}_1 > {t}_2 > \ldots > {t}_N$ be the $N$ zeros of the $N$-th Hermite polynomial $H_N$ and let $\omega_j$ be the Gauss-Hermite weigths in $(\ref{GHweights})$.
Then we have 
$$ \textstyle M_N := \sum\limits_{k=1}^N \omega_k \, {\mathrm e}^{{t}_k^2} < C \, N^{3/2},$$
where the constant $C$ is independent of $N$.
\end{lemma}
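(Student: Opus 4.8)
The plan is to evaluate $\omega_k\mathrm e^{t_k^{2}}$ in closed form, collapse the sum to its single largest term, and then estimate that term near the turning point $\sqrt{2N}$. For the first step I combine $\omega_k=\frac{2^{N-1}N!\sqrt\pi}{N^{2}[H_{N-1}(t_k)]^{2}}$ from $(\ref{GHweights})$ with the definition $(\ref{hermitef})$ of $\psi_{N-1}$; since $H_{N-1}(t_k)^{2}=2^{N-1}(N-1)!\sqrt\pi\,\mathrm e^{t_k^{2}}\psi_{N-1}(t_k)^{2}$, a one-line computation gives
\[
\omega_k\,\mathrm e^{t_k^{2}}=\frac{1}{N\,\psi_{N-1}(t_k)^{2}},\qquad\text{hence}\qquad M_N=\frac1N\sum_{k=1}^{N}\frac{1}{\psi_{N-1}(t_k)^{2}} .
\]
(Equivalently, the confluent Christoffel--Darboux identity gives $\sum_{n=0}^{N-1}\psi_n(t_k)^{2}=N\psi_{N-1}(t_k)^{2}$, so $\omega_k\mathrm e^{t_k^{2}}$ is the reciprocal of the diagonal Christoffel--Darboux kernel of the first $N$ Hermite functions.) It therefore suffices to bound $\sum_{k}\psi_{N-1}(t_k)^{-2}$ by $C N^{5/2}$.

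Next I reduce to the largest zero by an energy/Sonin argument, in the spirit of the proof of Lemma~\ref{lemma-Hermite}. The function $y(t):=\mathrm e^{-t^{2}/2}H_N(t)$ solves $y''+(2N+1-t^{2})y=0$, so for $E(t):=y'(t)^{2}+(2N+1-t^{2})\,y(t)^{2}$ one computes, using $y''=-(2N+1-t^{2})y$,
\[
E'(t)=-2t\,y(t)^{2}\le 0\qquad(t\ge 0),
\]
and $E$ is even because $H_N$ has a definite parity; hence $E$ is nonincreasing on $[0,\infty)$. At a zero $t_k$ of $H_N$ we have $y(t_k)=0$ and, by $H_N'=2NH_{N-1}$ from $(\ref{rec})$, $E(t_k)=y'(t_k)^{2}=4N^{2}\,2^{N-1}(N-1)!\sqrt\pi\,\psi_{N-1}(t_k)^{2}$, a multiple of $\psi_{N-1}(t_k)^{2}$ with $k$-independent constant. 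Since $t_1$ has the largest modulus among the zeros and $\psi_{N-1}^{2}$ is even, monotonicity of $E$ yields $\psi_{N-1}(t_k)^{2}\ge\psi_{N-1}(t_1)^{2}$ for all $k$, and therefore
\[
M_N=\frac1N\sum_{k=1}^{N}\frac{1}{\psi_{N-1}(t_k)^{2}}\le\frac{1}{\psi_{N-1}(t_1)^{2}} .
\]

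It remains to show $\psi_{N-1}(t_1)^{2}\ge c\,N^{-3/2}$. Here I would use the classical localisation $\sqrt{2N}-C_1N^{-1/6}\le t_1\le\sqrt{2N+1}$ of the largest zero together with the Plancherel--Rotach (Airy) behaviour of Hermite functions at the soft edge: on the Airy scale $t_1$ lies at a bounded coordinate, where $\psi_{N-1}$ has amplitude $\asymp N^{-1/12}$ and slope $|\psi_{N-1}'|\asymp N^{1/12}$; since the largest zeros of $H_N$ and $H_{N-1}$ differ by $\asymp N^{-1/2}$ (by interlacing), this gives $\psi_{N-1}(t_1)^{2}\asymp N^{-5/6}$, far more than the $N^{-3/2}$ required. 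Combined with the previous step, $M_N\le\psi_{N-1}(t_1)^{-2}\le CN^{3/2}$ (in fact $\le CN^{5/6}$). Alternatively, one may simply invoke the known bound $\omega_k\mathrm e^{t_k^{2}}\le C\sqrt N$ for the Gauss--Hermite nodes — a lower estimate for the Christoffel function of the weight $\mathrm e^{-t^{2}}$ — which gives $M_N=\sum_k\omega_k\mathrm e^{t_k^{2}}\le CN^{3/2}$ at once.

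The only genuine obstacle is this last step: the rewriting and the monotonicity reduction are elementary and essentially sharp, whereas the factor-$N$ gap between the easily available lower bound on $\psi_{N-1}(t_1)$ and its true order ($N^{-5/6}$ versus the needed $N^{-3/2}$) is precisely what forces the non-sharp exponent $N^{3/2}$ in the statement. If one prefers to bypass the edge asymptotics, the route is to split the nodes into a bulk part $t_k^{2}\le(2-\delta)N$, where the Markov--Stieltjes inequalities and the standard lower bound on the spacing of consecutive zeros give an $O(\sqrt N)$ contribution, and an edge part, for which the monotonicity of the second step plus any polynomial-in-$N$ lower bound for $\psi_{N-1}(t_1)$ already suffices.
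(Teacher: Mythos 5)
Your first two steps are correct and genuinely different from the paper's argument, and they are nicely streamlined: the identity $\omega_k\,\mathrm{e}^{t_k^2}=\frac{1}{N\,\psi_{N-1}(t_k)^2}$ checks out, and the energy function $E(t)=y'(t)^2+(2N+1-t^2)y(t)^2$ for $y=\mathrm{e}^{-t^2/2}H_N$ with $E'(t)=-2t\,y(t)^2\le 0$ on $[0,\infty)$ does give $\psi_{N-1}(t_k)^2\ge \psi_{N-1}(t_1)^2$ for all $k$, hence $M_N\le \psi_{N-1}(t_1)^{-2}$. Up to this point the argument is rigorous and would even be capable of producing a bound sharper than $N^{3/2}$.

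The genuine gap is the third step, which carries the entire analytic content of the lemma and is not proved. You need $\psi_{N-1}(t_1)^2\ge c\,N^{-3/2}$ at the largest zero $t_1$ of $H_N$, i.e.\ a point lying just beyond the largest zero of $H_{N-1}$, in the Airy (soft-edge) region. Your sketch rests on two unestablished inputs: (i) the claim that the largest zeros of $H_N$ and $H_{N-1}$ differ by $\asymp N^{-1/2}$ is \emph{not} a consequence of interlacing — interlacing only places the largest zero of $H_{N-1}$ between the top two zeros of $H_N$, a window of width $\asymp N^{-1/6}$; a lower bound of order $N^{-1/2}$ for this gap requires two-term asymptotics of the largest zero with error control finer than the standard $o(N^{-1/6})$ statements, which would otherwise swamp the $N^{-1/2}$ term; (ii) converting ``amplitude $\asymp N^{-1/12}$, slope $\asymp N^{1/12}$'' into a \emph{lower} bound for $|\psi_{N-1}|$ at a point a distance $\asymp N^{-1/2}$ past its largest zero needs uniform Plancherel--Rotach/Airy asymptotics with explicit remainders, lower bounds on the slope on the whole intervening interval, and the zero-gap estimate from (i). Neither is carried out, and the fallback of ``invoking the known bound $\omega_k\mathrm{e}^{t_k^2}\le C\sqrt{N}$'' (a Christoffel-function estimate for the Hermite weight) is, via your own reduction, essentially the statement to be proved, cited without reference or proof. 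For contrast, the paper deliberately avoids the soft edge: it compares $\psi_{N+1}(t_k)$ with the value at the adjacent local extremum $t_k^*$ (using $\psi_{N+1}'(t_k)=-t_k\psi_{N+1}(t_k)$), invokes Szeg\H{o}'s Theorem 7.6.3 to see that the extremal values of the Hermite function \emph{increase outward}, so the worst case is the innermost extremum near $0$, whose value is bounded below explicitly by $\gtrsim N^{-1/4}$ from $H_{N+1}(0)$ resp.\ $H_{N+1}'(0)$ and Stirling; the total comparison cost is controlled by the trace identity $\sum_k t_k^2=\frac{N(N-1)}{4}$, giving $N^{3/2}$ with purely elementary, citable ingredients. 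If you want to keep your elegant reduction to the single edge value, you must supply a rigorous polynomial lower bound for $\psi_{N-1}(t_1)^2$; as written, that step would fail scrutiny.
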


\begin{proof}
1. The weights $\omega_k$ in (\ref{GHweights}) can be rewritten with the Hermite functions in (\ref{hermitef}) as
$$ \textstyle \omega_k= 
 \frac{1}{(N+1) \, {\mathrm e}^{{t}_k^2} [\psi_{N+1}({t}_k)]^2} ,$$
such that 
$$  \textstyle M_N= \sum\limits_{k=1}^{N} \omega_{k} \,  {\mathrm e}^{ {t}_{k}^{2}} = \frac{1}{N+1} \sum\limits_{k=1}^{N} \frac{1}{[\psi_{N+1}({t}_k)]^2}. $$
Then the symmetry of $\psi_{N+1}$ implies that 
\begin{align}\label{MN1}
\textstyle M_N= \frac{2}{N+1}  
 \sum\limits_{k=1}^{N/2} \frac{1}{[\psi_{N+1}({t}_k)]^2} \quad \text{or} \quad 
M_N= \frac{1}{N+1} \left( \frac{1}{[\psi_{(N+1)/2}(0)]^2} + 
2 \sum\limits_{k=1}^{(N-1)/2} \frac{1}{[\psi_{N+1}({t}_k)]^2} \right), 
\end{align}
for even and odd $N$, respectively, where  ${t}_{(N+1)/2}=0$ for odd $N$.
We consider the zeros and local extrema of $H_{N+1}$ and $\psi_{N+1}$ on $[0,\infty)$ more closely. Obviously, $H_{N+1}$ and $\psi_{N+1}$ share the same $\lfloor \frac{N+1}{2} \rfloor$ 
zeros $t_{1}^{(N+1)} > t_{2}^{(N+1)} > \ldots > t_{\lfloor \frac{N+1}{2} \rfloor}^{(N+1)}$ in $[0, \infty)$. 
By (\ref{rec}), the Hermite polynomial $H_{N+1}$ possesses local extrema at the zeros 
$t_{1} > t_{2} >\ldots > t_{n}$ of $H_{N}$, where 
$n:=\lfloor \frac{N+1}{2} \rfloor$. We denote the locations of local extrema of $\psi_{N+1}$ by $t_0^*>t_1^* > t_2^* > \ldots > t_{n}^* \ge 0$. Then we have the well-known interlacing property
\begin{align}
t_0^*>t_1^{(N+1)} > t_1 > t_1^* >t_2^{(N+1)}>t_2>t_2^*>t_3^{(N+1)}>\ldots > t_n^{(N+1)}>t_n>t_n^*\ge 0.
\label{inter}
\end{align}
Theorem 7.6.3 in \cite{Szego} yields
$$ |\psi_{N+1}(t_0^*)| >|\psi_{N+1}(t_1^*)| > |\psi_{N+1}(t_2^*)| > \ldots > |\psi_{{N+1}}(t_n^*)|. $$
Furthermore, we  always have $|\psi_{N+1}(t_k^*)| > |\psi_{N+1}({t}_k)|$, since ${\mathrm e}^{-t^{2}/2}$ is positive and  monotonically 
decreasing  for $t > 0$.
For $k=1, \ldots, n$,  $\psi_{N+1}(t_k^*)$ and $\psi_{N+1}({t}_k)$ have always the same sign, and we obtain for the difference of function values  
\begin{align}|\psi_{N+1}(t_k^*)  - \psi_{N+1}({t}_k)| = |\psi_{N+1}(t_k^*)|  - |\psi_{N+1}({t}_k)| \le |{t}_k- t_k^*|
\,  \max_{\xi \in [t_k^*, {t}_k]} | \psi_{N+1}'(t_k)|, 
\label{ta}
\end{align}
 since $\psi_{N+1}'(t_k^*)=0$ and $|\psi_{N+1}'(\xi)|$ is monotonically increasing in $[t_k^*, {t}_k]$.
Now,  $\psi_{N+1}'({t}_k)$ can by $H_{N+1}'({t}_k)= 2(N+1) \, H_N({t}_k)=0$ be rewritten as
$$ \textstyle  \psi_{N+1}'({t}_k) =  \frac{ \left( -{t}_k\, H_{N+1}({t}_k) + H_{N+1}'({t}_k)\right)}{(\sqrt{\pi}\, 2^{N+1}  \, (N+1)!)^{1/2}}   {\mathrm e}^{-{t}_k^{2}/2}  = -{t}_k\, \psi_{N+1}({t}_k). $$
Hence, (\ref{ta}) yields 
$\textstyle \left| \frac{\psi_{N+1}(t_k^*)}{\psi_{N+1}({t}_k)}\right| \le  |{t}_k - t_k^*| \, |{t}_k| +1$.
For even $N$, we conclude from (\ref{MN1})
$$ \textstyle M_N <  \textstyle \frac{2}{N+1} \sum\limits_{k=1}^{N/2}    \left( \frac{({t}_k- t_k^*) {t}_k + 1}{\psi_{N+1}({t}_k^*)}  \right)^2 <
 \frac{2}{(N+1) \, [\psi_{N+1}(t_{N/2}^*)]^2} \sum\limits_{k=1}^{N/2} (({t}_k- t_k^*) {t}_k + 1  )^2,
$$
and for odd $N$ similarly,
\begin{align*}
M_N <  \textstyle  \frac{1}{N+1} \left( \frac{1}{[\psi_{N+1}(0)]^2} + \frac{2}{[\psi_{N+1}(0)]^2}\sum\limits_{k=1}^{(N-1)/2}    \left(({t}_k- t_k^*) {t}_k + 1  \right)^2 \right).
\end{align*}
2.  According to \cite[formula (14)]{Salzer},  we have the relation $\textstyle \sum_{k=1}^{\lfloor N/2 \rfloor} {t}_k^2 = \frac{N(N-1)}{4}$ for the positive zeros of $H_N$.
Using the interlacing property (\ref{inter}) and 
observing that the largest zero of  $H_{N+1}$ is bounded by ${t}^{(N+1)}_{1} < \sqrt{2N+3}$, see \cite{Szego}, it follows that
$$ \textstyle \sum\limits _{k=1}^{\lfloor N/2 \rfloor} (t_k^*)^2 > \sum\limits_{k=2}^{\lfloor N/2 \rfloor } ({t}_k^{(N+1)})^2 = \frac{(N+1)N}{4} - ({t}_1^{{(N+1)}})^2 > \frac{N^2 + N}{4} -(2N+3) = \frac{N^2}{4}-\frac{7N}{4}-3.$$
Hence,
$$  \textstyle \sum\limits_{k=1}^{\lfloor N/2 \rfloor} {t}_k^2 - (t_k^*)^2  < \frac{N(N-1)}{4} - \frac{N^2}{4}+\frac{7N}{4}+3 = \frac{3N}{2} +3. $$
We conclude 
\begin{align*} \textstyle  
 \textstyle \sum\limits_{k=1}^{\lfloor N/2 \rfloor} 
({t}_k - t_k^*)\, {t}_k < \sum\limits_{k=1}^{\lfloor N/2 \rfloor} 
({t}_k - t_k^*)\, ({t}_k + t_k^*) = \sum\limits_{k=1}^{\lfloor N/2\rfloor} {t}_k^2 - (t_k^*)^2   < \frac{3N}{2} +3, 
\end{align*}
 and therefore 
\begin{align*} 
 \textstyle \sum\limits_{k=1}^{\lfloor N/2 \rfloor} (({t}_{k} - t_{k}^*)  {t}_k +1 )^2
 < \Big(\sum\limits_{k=1}^{\lfloor N/2 \rfloor} (({t}_{k} - t_{k}^*)  {t}_k +1 )\Big)^2 < \tilde{C} N^2
\end{align*} 
for each $N>0$ with some suitable constant $\tilde{C}$ being independent of $N$. 
 
\noindent 
Finally, we have to estimate $[\psi_{N+1}(0)]^2$ for odd $N$ and $[\psi_{N+1}(t_{N/2}^*)]^2$ for even $N$. 
 For odd $N$ we obtain from $(H_{N+1}(0))^2= 2^{N+1} \, (N!!)^2$ that 
\begin{align*} 
[\psi_{N+1}(0)]^2 &= \textstyle \frac{2^{N+1} \, (N!!)^2}{\sqrt{\pi}  \, 2^{N+1} \, (N+1)!} 
= \frac{1}{\sqrt{\pi}  \, 2^{N+1}} \binom{N+1}{\frac{N+1}{2}} >
\frac{1}{\sqrt{\pi}  \, 2^{N+1}} \frac{2^{N+1}}{\sqrt{\pi(N+2)/2}} =  \frac{1}{\pi}\sqrt{\frac{2}{ N+2}},
\end{align*}
where we have used that $\frac{2^{N+1}}{\sqrt{\pi(N+2)/2}}< \binom{N+1}{\frac{N+1}{2}} < \frac{2^{N+1}}{ \sqrt{\pi (N+1)/2}}$ by Stirling's formula. 
Therefore,
\begin{align*} 
M_N &\le  \textstyle \frac{1}{(N+1)[\psi_{N+1}(0)]^2  }  ( 1 + 2\, \tilde{C} \, N^2) < \frac{\pi \sqrt{N+2}}{\sqrt{2}(N+1)} (1+2 \, \tilde{C} \, N^2) < C_o  \, N^{3/2}, 
\end{align*}
with some $C_o$ being independent of $N$.
For even $N$, we conclude from Formulas (8.65.2), (8.65.3) in \cite{Szego} (with the normalization weights considered for the Hermite functions) that  
\begin{align*}
[\psi_{N+1}(t_{N/2}^*)]^2 &=  \textstyle c_e  \, \Big(\frac{1}{\sqrt{\pi} \, 2^{N+1}  \, (N+1)!}  \Big) \, \frac{[H_{N+1}'(0)]^2}{2N+3} = c_e  \, \frac{(2N+2)^2 \, [H_N(0)]^2}{\sqrt{\pi} \, 2^{N+1}  \, (N+1)! (2N+3)} \\
&= \textstyle c_e  \, \frac{(2N+2)^2 \, 2^N \, [(N-1)!!]^2}{\sqrt{\pi} \, 2^{N+1}  \, (N+1)! (2N+3)} 
> c_e \, \frac{(N+1)}{\sqrt{\pi}\,  (N+3/2)}  \, \frac{1}{2^N}  \frac{2^N}{\sqrt{\pi \, (N+1)/2}}  > c_e  \frac{1}{\pi} \frac{1}{\sqrt{N+1}}
\end{align*}
for some $c_e$ being independent of $N$,  and therefore,
\begin{align*} 
M_N &\le  \textstyle \frac{1}{ (N+1)\, [\psi_{N+1}(t_{N/2}^*)]^2  }  (2 \, \tilde{C} \, N^2)  <  {C_e} \, N^{3/2},
\end{align*}
where the constant $C_e$ is independent of $N$.
\end{proof}

\begin{remark} 
Observe that we cannot use the Gauss-Hermite formula (\ref{GH}) to prove Lemma \ref{lem2}, since while 
$\sum_{k=1}^N \omega_k \, {\mathrm e}^{{t}_k^2}$ is bounded by $C N^{3/2}$, we obtain with $f(t) = {\mathrm e}^{{t}^2}$ in (\ref{GH}) only
$ \textstyle \sum_{k=1}^N \omega_k \, {\mathrm e}^{{t}_k^2} = \int_{-\infty}^{\infty} {\mathrm e}^{-t^{2}}\, {\mathrm e}^{t^{2}} {\textrm d} t + R_{f}, $
where the right-hand side is not finite.
\end{remark}

\noindent
With these preliminaries we are ready to  prove Theorem \ref{maintheorem}.

\begin{proof} (of Theorem \ref{maintheorem})
1. We introduce the notations $s_{0} := \frac{r(1+r)}{(2r+1)}$ and $s_{1}:=\frac{r^{2}}{2(2r+1)}$ and  let  again ${t}_k$, $k=1, \ldots, N$, be the ordered zeros of $H_{N}$.
As shown in (\ref{error1}),  the approximation error $F(\gamra)$  is of the form 
$  \textstyle F({\gamra})  = \sqrt{\frac{2\pi \rho}{2r + 1}} - 2 {\mathbf g}^T {\gamra} + {\gamra}^T {\mathbf H}_N {\gamra}, 
$
where the components  $g_j$  of ${\mathbf g}$ are by (\ref{gjneu}) given by 
$$g_{j} =  \textstyle \frac{\sqrt{2\pi \rho}}{\sqrt{1+r}} {\mathrm e}^{-\frac{r}{{2r+1}} t_{j}^{2}} = \frac{\sqrt{2\pi \rho}}{\sqrt{1+r}} {\mathrm e}^{-(s_0-2s_1) {t}_{j}^{2}}.$$
Further,  we recall  from (\ref{Hjk}) that the components of the matrix ${\mathbf H}_N$ can be represented as 
$$ H_{j,k} = H_{k,j}= \sqrt{2\pi \rho} \, {\mathrm e}^{-\frac{(r+1)r}{{2r+1}}(t_{j}-t_{k})^{2} }=  \sqrt{2\pi \rho} \,  {\mathrm e}^{-s_{0}({t}_{j}-{t}_{k})^{2}}.$$
We introduce now the coefficient vector $\gamra^{(H)} = (\gamma_j^{(H)})_{j=1}^N$ with 
\begin{align}\label{gammaneu} \textstyle \gamma_{j}^{(H)} :=
\frac{\sqrt{r+1}}{\sqrt{\pi(2r+1)}} \, {\mathrm e}^{(s_{0}-2s_{1}) {t}_{j}^{2}}\, \omega_{j} , \qquad j=1, \ldots , N,
\end{align} 
where $\omega_j$ are the weights of the Gauss-Hermite formula (\ref{GH}) given in (\ref{GHweights}).
Then, observing that $\sum_{j=1}^{N} \omega_{j} =  \int_{-\infty}^{\infty} {\mathrm e}^{-t^{2}} dt = \sqrt{\pi}$,
we find
\begin{align*}
\textstyle  {\mathbf g}^{T} \gamra^{(H)} = \sum\limits_{j=1}^{N} g_{j} \gamma_{j} 
&= \textstyle \frac{\sqrt{2\pi\rho}}{\sqrt{\pi(2r+1)}}  \sum\limits_{j=1}^{N} \omega_{j} =\frac{\sqrt{2\pi\rho}}{\sqrt{2r+1}} .
\end{align*}
Therefore, with  $\gamra^{(H)}$ in (\ref{gammaneu}), we have
 $ \textstyle   F({\gamra}^{(H)})   =
\textstyle  ({\gamra}^{(H)})^T {\mathbf H}_N {\gamra}^{(H)} - \sqrt{\frac{2\pi \rho }{2r + 1}}. $

\noindent
2. Next, we consider  $({\gamra}^{(H)})^{T} {\mathbf H}_N {\gamra}^{(H)}$. For the components of ${\mathbf H}_N {\gamra}^{(H)}$  we have 
\begin{align} 
\textstyle \sum\limits_{j=1}^{{N}} H_{j,k} \, \gamma_{j}^{(H)}= \textstyle \frac{\sqrt{2 \rho (r+1)}}{\sqrt{2r+1}}   {\mathrm e}^{-s_{0} {t}_{k}^{2}}\sum\limits_{j=1}^{N} \omega_{j} \, {\mathrm e}^{-2s_{1} {t}_{j}^{2} +2s_{0}{t}_{j}{t}_{k} }
 = \frac{\sqrt{2 \rho (r+1)}}{\sqrt{2r+1}}   {\mathrm e}^{-s_{0} {t}_{k}^{2}}\sum\limits_{j=1}^{N} \omega_{j} f_k(t_j).
\label{fk1}
\end{align}
 with  $f_{k}(t) := {\mathrm e}^{(-2s_{1}  t^{2}  +2s_{0} {t}_{k} t)}$ for $k=1, \ldots , N$.
Using the formula $\int_{-\infty}^{\infty}  \mathrm{e}^{-at^{2}+bt} \, \mathrm{d}t=\sqrt{\frac{\pi}{a}} \, \mathrm{e}^{\frac{b^{2}}{4a}}$ for $a>0$, we get
$$ \textstyle \int\limits_{-\infty}^{\infty} {\mathrm e}^{-t^{2}}\,  f_{k}(t)\,  dt = \int\limits_{-\infty}^{\infty} {\mathrm e}^{-t^{2}(1+2s_{1}) + t(2s_{0}{t}_{k})}\,   dt
= \sqrt{\frac{\pi}{1+2s_1}} {\mathrm e}^{\frac{4s_{0}^{2}  {t}_{k}^{2}}{4(1+2s_1)}} =  \sqrt{\frac{\pi(2r+1)}{(r+1)^{2}}} {\mathrm e}^{2 s_{1} {t}_{k}^{2}}$$
and therefore,  the Gauss-Hermite quadrature formula (\ref{GH}) yields
$$ \textstyle \sum\limits_{j=1}^{N} \omega_{j}\, f_{k}({t}_{j}) = \sqrt{\frac{\pi(2r+1)}{(r+1)^{2}}} {\mathrm e}^{2 s_{1} {t}_{k}^{2}}+R_{f_{k}},$$
where the error $R_{f_{k}}$ can be estimated as in Lemma \ref{lemma-Hermite}.
Hence,  (\ref{fk1}) implies
\begin{align*}
({\gamra}^{(H)})^{T} {\mathbf H}_N {\gamra}^{(H)}\! &=\!  \textstyle \sum\limits_{k=1}^N \!\gamma_k^{(H)}\!\sum\limits_{j=1}^{{N}}\! H_{j,k} \gamma_{j}^{(H)} \!= \!
\textstyle \sum\limits_{k=1}^{N} \gamma_{k}^{(H)} \!\Big(\frac{\sqrt{2 \pi \rho}}{\sqrt{r+1}} {\mathrm e}^{(-s_{0}+2s_{1}) {t}_{k}^{2}} \!+\! \frac{\sqrt{2 \rho (r+1)}}{\sqrt{2r+1}} {\mathrm e}^{-s_{0} {t}_{k}^{2}} \, R_{f_{k}}\Big)\\
&= \textstyle \frac{\sqrt{2 \pi \rho}}{\sqrt{2r+1}}  + \frac{\sqrt{2\rho} (r+1)}{\sqrt{\pi}(2r+1)} \sum\limits_{k=1}^{N} \omega_{k}\,  {\mathrm e}^{-2s_{1} {t}_{k}^{2}} \,  R_{f_{k}} .
\end{align*}
Hence, we arrive at 
$$ \textstyle F(\gamra^{(H)}) = ({\gamra}^{(H)})^T {\mathbf H}_N {\gamra}^{(H)} - \sqrt{\frac{2\pi \rho }{2r + 1}} = \frac{\sqrt{2\rho} (r+1)}{\sqrt{\pi}(2r+1)} \sum\limits_{k=1}^{N} \omega_{k} \,  {\mathrm e}^{-2s_{1} {t}_{k}^{2}} \,  R_{f_{k}}. $$

\noindent
3. From Lemma \ref{lemma-Hermite} it follows that 
$ |R_{f_{k}} | <   \sqrt{\pi} \, s_{1}^{N} \, {\mathrm e}^{\frac{s_{0}^{2}}{ 2s_{1}} {t}_{k}^{2}}. $
Therefore we obtain with $-2s_{1}+ \frac{s_{0}^{2}}{2s_{1}} =1$ and $\sum_{k=1}^{N} \omega_{k}= \sqrt{\pi}$,

\begin{align} 
\textstyle F(\gamra^{(H)}) &  \le  \textstyle \frac{\sqrt{2\rho} (r+1)}{\sqrt{\pi}(2r+1)} \sum\limits_{k=1}^{N} \omega_{k}\,  {\mathrm e}^{-2s_{1} \, {t}_{k}^{2}} \,  |R_{f_{k}}|  \leq
\label{app}
 \textstyle   \frac{\sqrt{2\rho} \, (r+1)}{ 2r+1} \, s_{1}^{N} \, \sum\limits_{k=1}^{N} \omega_{k} \, {\mathrm e}^{ {t}_{k}^{2}}.
\end{align}

\noindent
Finally, applying Lemma \ref{lem2} 
 we obtain 
$$ F(\gamra^{(H)}) \le  \tilde{c} \, s_1^N \, N^{3/2}, $$
where   $\tilde{c}$ is independent of $N$.
Exponential decay is achieved for $s_{1} < 1$, i.e., $r < 2+\sqrt{6}$. 
Since the exponential decay is obtained for $\gamra^{(H)}$ and $ F({\gamra})  < F(\gamra^{(H)})$, the assertion of  Theorem \ref{maintheorem} follows.
\end{proof}

\subsection{Error estimate in the $L^{2}({\mathbb R})$-norm}

Since the approximating exponential sum  derived in  Section \ref{infinseg} is  a cosine sum, the error 
$$ \textstyle {\mathrm e}^{-t^{2}/2\sigma} - \sum\limits_{k=1}^{N} {{\gamma}_k} \, {\mathrm e}^{\lambda_{k} t } $$
on the real line needs to be considered in a weighted $L^{2}({\mathbb R})$ norm to achieve an exponentially decaying error.
However, since  ${\mathrm e}^{-t^{2}/2\sigma}$  itself has exponential decay for ${|t|} \to \infty$, we can approximate ${\mathrm e}^{-t^{2}/2\sigma}$ also by the truncated exponential sum $\chi_{[-T, T]}(t) \, \sum_{k=1}^{N} {\gamma}_k \, {\mathrm e}^{\lambda_{k} t }$, where $\chi_{[-T,T]}$ denotes the characteristic function of the interval $[-T, T]$ for positive $T$, and again obtain exponential decay of this approximation.

\begin{theorem}\label{th37}
For  $N >1$, let $\sum_{k=1}^{N} {\gamma}_{k} \, {\mathrm e}^{\lambda_{k}t}$
be the exponential sum computed by Algorithm $\ref{alg11}$ for $\sigma>0$ and $\rho = \frac{\sigma}{2}$. Further, let 
$T:= \sqrt{2 \sigma N \, \ln(2)}$. Then 
\begin{align*}
R_{T} := \textstyle \Big\|  {\mathrm e}^{-\cdot^{2}/2\sigma} - \chi_{[-T,T]}(\cdot)  \sum\limits_{k=1}^{N} {\gamma}_{k}  {\mathrm e}^{\lambda_{k} \cdot} \Big\|^{2}_{L^{2}({\mathbb R})}  \le  \textstyle \tilde{c} \, 2^{-2N} N^{3/2}, 
\end{align*}
where the constant $\tilde{c}$ does not depend on $N$.
\end{theorem}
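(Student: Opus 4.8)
The plan is to split $R_T$ into the contribution on the truncation window $[-T,T]$, where the truncated sum coincides with the full exponential sum, and the Gaussian tail outside it, where the truncated sum vanishes:
\begin{align*}
 R_{T} = \int_{-T}^{T} \Big| {\mathrm e}^{-t^{2}/2\sigma} - \sum_{k=1}^{N} {\gamma}_{k} {\mathrm e}^{\lambda_{k} t} \Big|^{2} {\mathrm d}t + \int_{|t|>T} {\mathrm e}^{-t^{2}/\sigma} {\mathrm d}t =: R_{T}^{(1)} + R_{T}^{(2)} .
\end{align*}
The reason for fixing $\rho = \sigma/2$ is that then the window function ${\mathrm e}^{-t^{2}/2\rho}$ appearing in $\|\cdot\|_{L^{2}({\mathbb R},\rho)}$ is exactly ${\mathrm e}^{-t^{2}/\sigma}$, so that $R_T^{(1)}$ is the weighted error of Theorem \ref{maintheorem} restricted to $[-T,T]$, once the weight is compensated on that bounded interval.

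For $R_{T}^{(1)}$ I would use that $T^{2}/\sigma = 2N\ln 2$, so ${\mathrm e}^{-t^{2}/\sigma} \ge {\mathrm e}^{-T^{2}/\sigma} = 2^{-2N}$ for $|t| \le T$, and hence (using that the integrand is nonnegative to extend the interval to all of ${\mathbb R}$)
\begin{align*}
 R_{T}^{(1)} \le 2^{2N} \int_{-T}^{T} \Big| {\mathrm e}^{-t^{2}/2\sigma} - \sum_{k=1}^{N} {\gamma}_{k} {\mathrm e}^{\lambda_{k} t} \Big|^{2} {\mathrm e}^{-t^{2}/\sigma} {\mathrm d}t \le 2^{2N}\, F(\gamra) ,
\end{align*}
where $F(\gamra)$ is the quantity in $(\ref{minnorm})$ for the coefficients produced by Algorithm \ref{alg11}. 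Since $r = \rho/\sigma = \tfrac12$, the base in Theorem \ref{maintheorem} satisfies $\big( r/\sqrt{2(2r+1)} \big)^{2} = \tfrac{r^{2}}{2(2r+1)} = \tfrac1{16}$, whence $F(\gamra) = \|\cdot\|_{L^{2}({\mathbb R},\rho)}^{2} < c^{2}\, 16^{-N} N^{3/2}$ and therefore $R_{T}^{(1)} \le c^{2}\, 2^{2N} 16^{-N} N^{3/2} = c^{2}\, 2^{-2N} N^{3/2}$.

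For the tail I would invoke the elementary estimate $\int_{T}^{\infty} {\mathrm e}^{-t^{2}/\sigma}\,{\mathrm d}t \le \tfrac1T \int_{T}^{\infty} t\, {\mathrm e}^{-t^{2}/\sigma}\,{\mathrm d}t = \tfrac{\sigma}{2T}\, {\mathrm e}^{-T^{2}/\sigma}$, which with $T = \sqrt{2\sigma N\ln 2}$ gives
\begin{align*}
 R_{T}^{(2)} = 2 \int_{T}^{\infty} {\mathrm e}^{-t^{2}/\sigma}\,{\mathrm d}t \le \frac{\sigma}{T}\, 2^{-2N} = \sqrt{\frac{\sigma}{2N\ln 2}}\; 2^{-2N} ,
\end{align*}
which is of order $N^{-1/2} 2^{-2N}$ and hence dominated by the bound on $R_{T}^{(1)}$. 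Summing the two contributions yields $R_{T} \le \tilde c\, 2^{-2N} N^{3/2}$ with $\tilde c$ independent of $N$.

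I do not expect a genuine obstacle here: the hard analytic work is already contained in Theorem \ref{maintheorem}, and the rest is bookkeeping. The one point that requires care is the calibration of $T$. Removing the Gaussian weight from the interval integral inflates the estimate by $\mathrm e^{T^{2}/\sigma}$, so $T$ must be small enough that $\mathrm e^{T^{2}/\sigma}\, 16^{-N}$ still decays exponentially, while at the same time $T$ must be large enough that the tail $\tfrac{\sigma}{2T}\mathrm e^{-T^{2}/\sigma}$ decays at least as fast. The value $T = \sqrt{2\sigma N\ln 2}$ is precisely the choice that matches the inflated exponential rate $2^{2N}\cdot 16^{-N} = 2^{-2N}$ against the tail rate $2^{-2N}$, leaving only the polynomial factor $N^{3/2}$ inherited from Theorem \ref{maintheorem}.
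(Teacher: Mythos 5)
Your proposal is correct and follows essentially the same route as the paper: both split $R_T$ into the window $[-T,T]$ and the Gaussian tail, compare the unweighted window integral with the weighted error $F(\gamra)$ of Theorem \ref{maintheorem} using ${\mathrm e}^{T^{2}/\sigma}=2^{2N}$, and bound the tail by an elementary estimate of order $2^{-2N}$. The only differences are cosmetic (the paper splits the window term via the factor ${\mathrm e}^{T^{2}/\sigma}-1$ and cites \cite{CDS2003} for the tail, while you multiply directly by ${\mathrm e}^{T^{2}/\sigma}$ and use $\int_T^\infty {\mathrm e}^{-t^2/\sigma}\,{\mathrm d}t\le \frac{\sigma}{2T}{\mathrm e}^{-T^2/\sigma}$), so no further changes are needed.
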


\begin{proof}
 Theorem \ref{maintheorem}  provides for the  setting $\rho = \frac{\sigma}{2}$ and $r=\frac{\rho}{\sigma} = \frac{1}{2}$,
$$ F({\gamra})  = \textstyle \|{\mathrm e}^{{-\cdot^2}/2\sigma} -  \sum\limits_{k=1}^{N} {\gamma}_{k} \, {\mathrm e}^{\lambda_{k} \cdot} \|^{2}_{L^{2}({\mathbb R},\rho)} <  c \, 2^{-4 N} N^{3/2}. $$
For the $L^{2}({\mathbb R})$-error it follows 
\begin{align*}
R_{T} &<  \textstyle F({\gamra}) + \int\limits_{-T}^{T} \Big|{\mathrm e}^{{-t^2}/2\sigma} -  \sum\limits_{k=1}^{N} {\gamma}_{k} \, {\mathrm e}^{\lambda_{k} t} \Big|^{2} \, \Big(1 - {\mathrm e}^{-t^{2}/\sigma} \Big)\, {\mathrm d} t + 2 \int\limits_{T}^{\infty} {\mathrm e}^{-t^{2}/\sigma} {\mathrm d}{t} .
\end{align*}
From \cite{CDS2003}, we  have  that
$ \textstyle 2 \int\limits_{T}^{\infty} {\mathrm e}^{-t^{2}/\sigma} {\mathrm d}{t} = 2 \sqrt{\sigma}  \, \int\limits_{T/\sqrt{\sigma}}^{\infty} {\mathrm e}^{-t^{2}} \, {\mathrm d} t 
< \sqrt{\pi \, \sigma} \, {\mathrm e}^{-T^{2}/\sigma}. $
Furthermore, 
\begin{align*}
I_{T}(\sigma) &:= \textstyle \int\limits_{-T}^{T} \Big|{\mathrm e}^{{-t^2}/2\sigma} -  \sum\limits_{k=1}^{N} {\gamma}_{k} \, {\mathrm e}^{\lambda_{k} t} \Big|^{2} \, \Big(1 - {\mathrm e}^{-t^{2}/\sigma} \Big)\, {\mathrm d} t \\
&\le  \textstyle \max\limits_{t \in [-T,T]} \Big({\mathrm e}^{t^{2}/\sigma} - 1 \Big) \, \int\limits_{-T}^{T} \Big|{\mathrm e}^{{-t^2}/2\sigma} -  \sum\limits_{k=1}^{N} {\gamma}_{k} \, {\mathrm e}^{\lambda_{k} t} \Big|^{2} \,{\mathrm e}^{-t^{2}/\sigma}  {\mathrm d} t  \\
& \le \textstyle \Big({\mathrm e}^{T^{2}/\sigma} - 1 \Big) \,  c \, 2^{-4 N}  N^{3/2}. 
\end{align*}
Since for $T=\sqrt{2 \sigma N \, \ln(2)}$ we have  ${\mathrm e}^{T^{2}/\sigma} = 2^{2N}$,  
 we can write the estimate
\begin{align*}
R_{T} &< F(\tilde{\gamra}) + I_{T}(\sigma) + \sqrt{\pi \, \sigma} \, {\mathrm e}^{-T^{2}/\sigma} \\
&\le \textstyle  c \, 2^{-4N} N^{3/2} + c 2^{-2N}  N^{3/2} + \sqrt{\pi \, \sigma} 2^{-2N} \le \tilde{c} \, 2^{-2N} \, N^{3/2}.  
\end{align*}
\end{proof}

\section{Relation to other approximation algorithms}
\label{sec:4}



 We want to compare Algorithm \ref{alg11} with  other known numerical approaches for reconstruction of exponential sums, which are based on discrete measurements of the function or its derivatives.
 Note that there are no error estimates available for any of theses approaches which are comparable to our results in Section \ref{sec:error}.
 
\subsection{Prony's method based on differential operator}
We compare our approach in Section \ref{infinseg} with  a Prony-type method based on the differential operator,  see e.g.\ \cite{PePl2013},  \cite{StPl2020}.  
In Section \ref{infinseg}, we computed the differential operator of the form (\ref{difoper}) that minimizes $\| D_{N}f\|_{L^{2}({\mathbb R}, \rho)}$.  
 By contrast, we consider now the discrete values $f^{(k)}({t_0})$, $k=0, \ldots , L$ (with $L\ge 2N-1$) for $f(t) = {\mathrm e}^{-t^{2}/2\sigma}$ and  solve the interpolation problem 
\begin{equation}\label{difpr}
 D_N f^{(k)} (t_{0})= f^{(N+k)}(t_{0}) + \sum\limits_{j=0}^{N-1} b_j f^{(j+k)}(t_{0})=0, \qquad  k=0,\ldots ,L-N,
 \end{equation}
 for some suitable $t_{0} \in {\mathbb R}$, to evaluate the coefficient vector ${\mathbf b}=(b_{0}, \ldots , b_{N-1},1)^{T}$ determining $D_{N}$.  The rationale  behind this approach is the following.
If $f$ were an exponential sum, then it would lead to the reconstruction of $f$, since for 
$f= \sum_{\nu=1}^{N} \gamma_{\nu} \, {\mathrm e}^{\lambda_{\nu}t}$ we obtain  for $t\in {\mathbb R}$ and $k=0,1,2,\ldots $
\begin{align}\nonumber
D_N f^{(k)} (t) &= \sum_{\nu=1}^{N} \gamma_{\nu} \, \lambda_{\nu}^{N+k} \, {\mathrm e}^{\lambda_{\nu}t} + \sum\limits_{j=0}^{N-1} b_j  \Big( \sum_{\nu=1}^{N} \gamma_{\nu} \, \lambda_{\nu}^{j+k} \, {\mathrm e}^{\lambda_{\nu}t} \Big) \\
\label{Prony1}
&= \sum_{\nu=1}^{N} \gamma_{\nu}  \lambda_{\nu}^{k} \, \Big( \lambda_{\nu}^{N} + \sum\limits_{j=0}^{N-1} b_j  \lambda_{\nu}^{j} \Big) \, {\mathrm e}^{\lambda_{\nu}t} 
 = \sum_{\nu=1}^{N} \gamma_{\nu}  \lambda_{\nu}^{k} P_N(\lambda_{\nu}) {\mathrm e}^{\lambda_{\nu}t}=0
\end{align}
with $P_{N}(\lambda)$ in (\ref{P}).
The equations in (\ref{difpr}) can be rewritten as 
\begin{equation}\label{prmat}
{\mathbf H}_{L-N,N+1}  \mathbf{b}^{T} = {\mathbf 0},
\end{equation}
with the Hankel matrix 
\begin{equation}\label{hankmat}
{\mathbf H}_{L-N,N+1}=\left( f^{(k+\ell)}(t_{0}) \right)_{k=0,\ell=0}^{L-N-1,N}.
\end{equation}
For $f(t) = {\mathrm e}^{-t^{2}/2\sigma}$ and $t_{0}=0$, the entries of the Hankel matrix are obtained in the form
\begin{align} \label{fk0}
f^{(k)}(0) &= \frac{{\mathrm d}^{k}}{{\mathrm d} t^{k}} {\mathrm e}^{-t^{2}/2\sigma}\left|_{t=0} \right. = (-1)^{k} \, (2\sigma)^{-\frac{k}{2}} \, H_{k}(0) 
=  \left\{ \begin{array}{ll}  (-1)^{{\frac{k}{2}}} \,  \sigma^{-\frac{k}{2}} \, (k-1)!! &  k \, \mathrm{even} \\
 0 &  k \, \mathrm{odd}.
\end{array} \right.
\end{align}
 Similarly as in Section \ref{appreal}  relations (\ref{difpr}) and (\ref{Prony1}) imply
 that the wanted frequencies $\lambda_1, \ldots, \lambda_N$, i.e., the zeros of the characteristic polynomial $P_{N}$ in (\ref{P}) are eigenvalues of matrix pencil 
\begin{equation}\label{matpenhan}
\lambda {\mathbf H}_{L-N, N}(0)-{\mathbf H}_{L-N,N}(1),
\end{equation}
where ${\mathbf H}_{L-N, N}(0) = (f^{(k+\ell)}(0))_{k=0,\ell=0}^{L-N-1,N-1}$ and 
${\mathbf H}_{L-N, N}(1) = (f^{(k+\ell)}(0))
_{k=0,\ell=1}^{L-N-1,N}$ are submatrices of ${\mathbf H}_{L-N, N+1}$. 
To solve this matrix pencil problem numerically  we employ the  SVD of ${\mathbf H}_{L-N,N+1} $ in (\ref{hankmat}) of the form 
${\mathbf H}_{L-N,N+1}  = {\mathbf U}_{L-N} \, {\mathbf D}_{L-N,N+1} {\mathbf W}_{N+1}$, 
with unitary square matrices ${\mathbf U}_{L-N}$ and ${\mathbf W}_{N+1}$.
Then, the frequencies can be also found by solving the matrix pencil problem 
\begin{equation}\label{matpen3}
\lambda \,  {\mathbf W}_{N}(0) - {\mathbf W}_{N}(1)
\end{equation}
with ${\mathbf W}_{N}(0)={\mathbf W}_{N+1}(1:N,1:N) $, ${\mathbf W}_{N}(1)={\mathbf W}_{N+1}(1:N,2:N+1)$. 
Finally, the coefficients $\gamma_j$ are computed by solving the overdetermined linear system
\begin{equation}\label{coefpr}
\sum\limits_{j=1}^{N} \gamma_j \,  \lambda_j^{k} =f^{(k)}(0), \ k=0,\ldots, L,
\end{equation}
where the values $f^{(k)}(0)$ are given by (\ref{fk0}). This method is summarized in Algorithm \ref{alg3}. The numerical complexity of Algorithm {\ref{alg3}} is governed by the SVD of the Hankel matrix of size $L-N \times N+1$ and is again  $\mathcal{O}(N^{3})$ flops for $2N-1 \le L \le c N$ for some constant $c$. 

In Figure \ref{fig21}, we compare Algorithm \ref{alg11} and Algorithm \ref{alg3} with regard to the weigthed  $L^{2}({\mathbb R},\rho)$-norm. 
We observe that Algorithm \ref{alg11} provides a better  approximation error and higher numerical stability for larger $N$.

\begin{figure}[h]
\centering
\begin{subfigure}{0.46\textwidth}
\includegraphics[width=\textwidth]{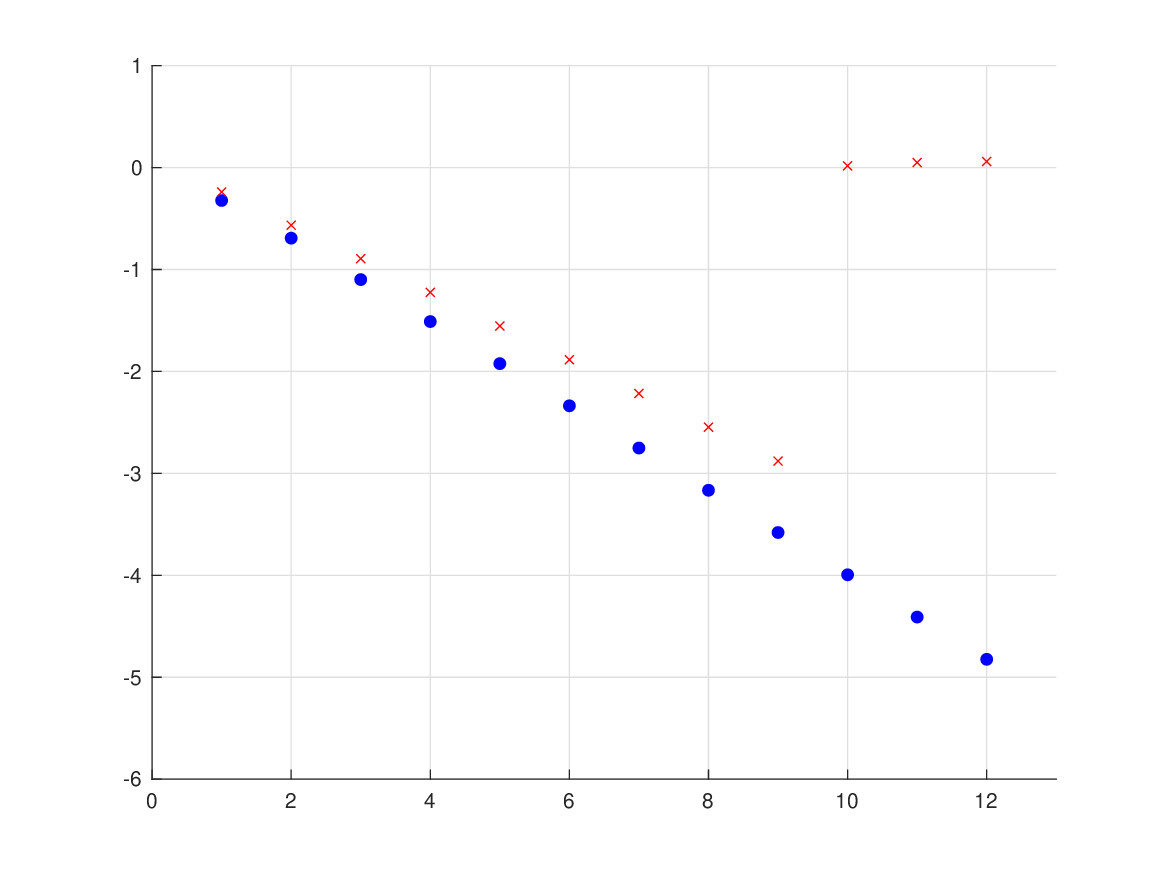}   
\end{subfigure}
\begin{subfigure}{0.46\textwidth}
\includegraphics[width=\textwidth]{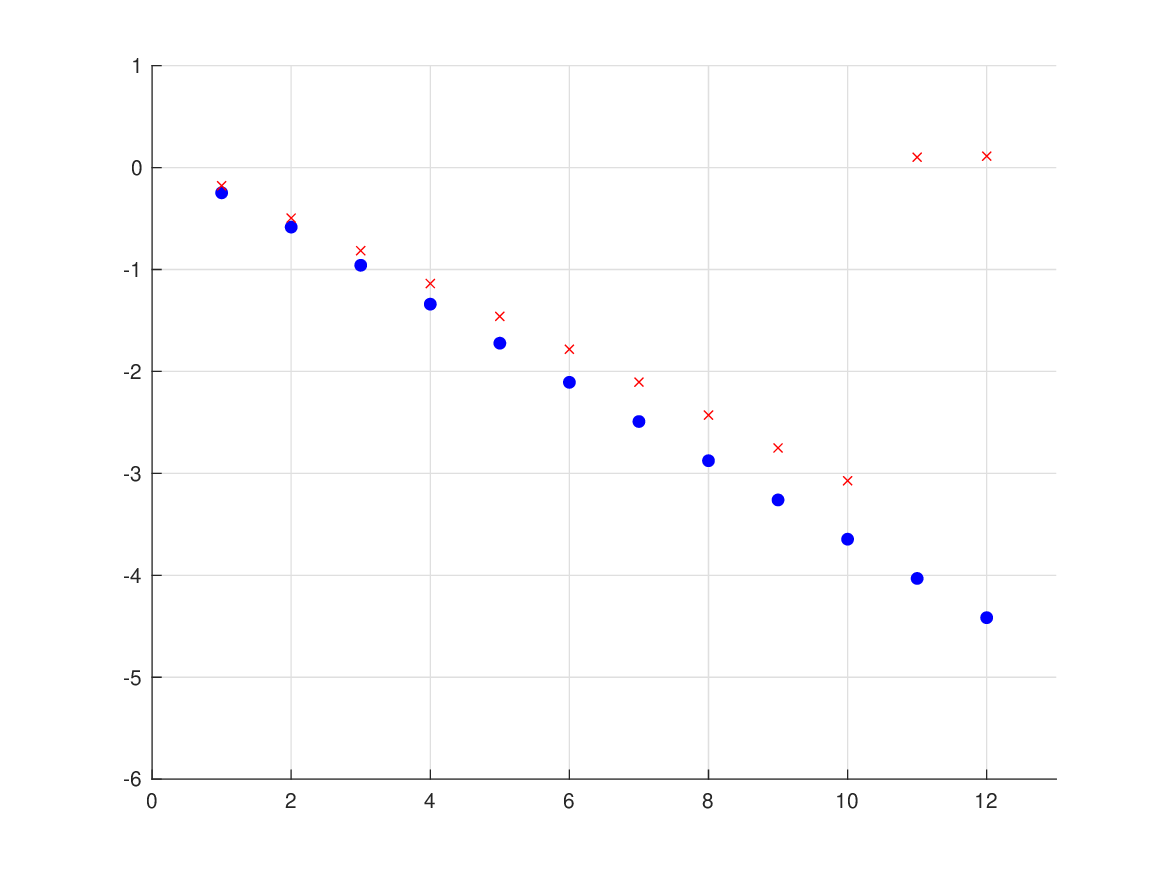}   
\end{subfigure}
\caption{Approximation error in logarithmic scale with respect to $N=1,\ldots,12$ computed with Algorithm \ref{alg11} (blue points) and Algorithm \ref{alg3} (red crosses) for $\sigma=0.8$ with $\rho=1$ (left) and $\sigma=1.25$ with $\rho=1.75$ (right).}
\label{fig21}
\end{figure}

\begin{algorithm}[ht]
\caption{Prony's method based on differential operator for   approximation of $\mathrm{e}^{-t^{2}/2\sigma}$ }
\label{alg3}
\small{
\textbf{Input:} parameters $\sigma>0$, $N \in \nn$ the order of an exponential sum, $t_0=0$, $L \ge 2N-1$;

\begin{enumerate}
\item Create a Hankel matrix
$
{\mathbf H}_{L-N,N+1}= (f^{(k+\ell)}(0))_{k=0,\ell=0}^{L-N-1,N}$ with $f^{(k)}(0)$ given in (\ref{fk0}) 
and compute the SVD ${\mathbf H}_{L-N,N+1}  = {\mathbf U}_{L-N} \, {\mathbf D}_{L-N,N+1} {\mathbf W}_{N+1}$. 
\item Compute the frequencies  $\lambda_1,\ldots ,\lambda_N$ as eigenvalues of the matrix  $\left(  {\mathbf W}_{N}(0)^{T} \right)^{\dagger}  {\mathbf W}_{N}(1)^{T},$
with matrices as in (\ref{matpen3}), where $\left({\mathbf W}_{N}(0)^{T} \right)^{\dagger}$ denotes the Moore-Penrose inverse of  ${\mathbf W}_{N}(0)^{T}$.

\item Compute the coefficients  $\gamma_{j}$, $j=1,\ldots ,N$ as the least-squares solution of the system
$$
\sum\limits_{j=1}^{N} \gamma_j \,  \lambda_j^{k} =f^{(k)}(0), \ k=0,\ldots, L.
$$
\end{enumerate}

\noindent
\textbf{Output:}  parameters $\lambda_j$ and $\gamma_{j}$ for $j=1,\ldots ,N$  such that 
$\sum_{j=1}^{N} \gamma_j \mathrm{e}^{\lambda_j t}$ approximates $\mathrm{e}^{-t^{2}/2\sigma}$.}
\end{algorithm}

\subsection{Comparison with ESPRIT and ESPIRA }
Now we compare our method with  the Prony type methods  ESPRIT and ESPIRA described in \cite{DPP21}.  Note that for Prony type methods  for approximation  of a function $f$ by exponential sums, the  nodes $\mathrm{e}^{\lambda_j}$ are computed as eigenvalues of a Hankel or a Loewner matrix pencil,
where these matrices are constructed  from a finite number of samples of $f$. In a second step,
the coefficients $\gamma_j$ are determined by solving a linear least squares problem with Vandermonde or Cauchy type coefficient matrices. By contrast, our method in Section \ref{appreal}
 is a matrix pencil method, where the involved matrices are constructed from weighted integral values to solve the minimization problem (\ref{minnorm}), and we have shown that the eigenvalues of this matrix pencil are  zeros of a scaled Hermite polynomial.
Such an interpretation of the eigenvalues  is not possible for the matrix pencils appearing for ESPRIT or ESPIRA.  
For a further comparison of Prony-kind methods and the differential approximation method we refer to  \cite{Scha79}.


\begin{figure}[h]
\centering
\begin{subfigure}{0.32\textwidth}
\includegraphics[width=\textwidth]{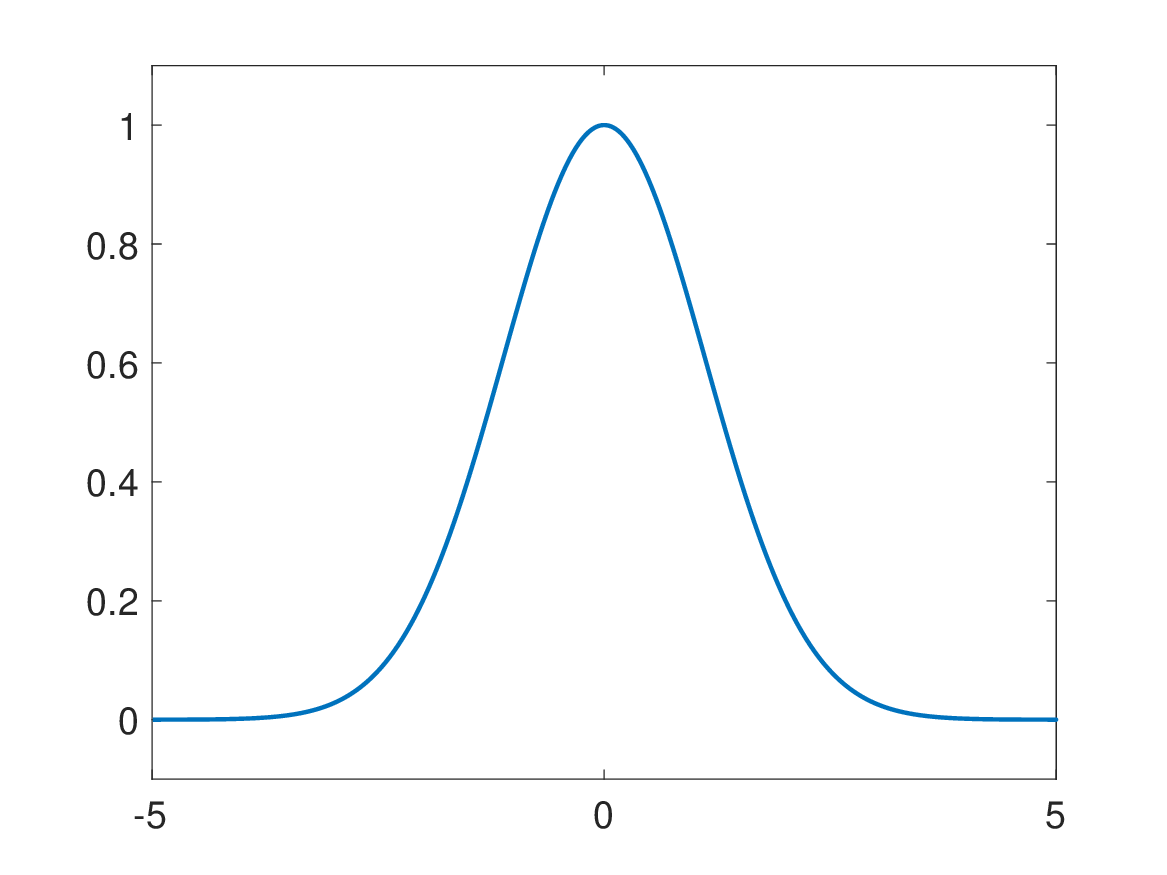}   
\end{subfigure}
\begin{subfigure}{0.32\textwidth}
\includegraphics[width=\textwidth]{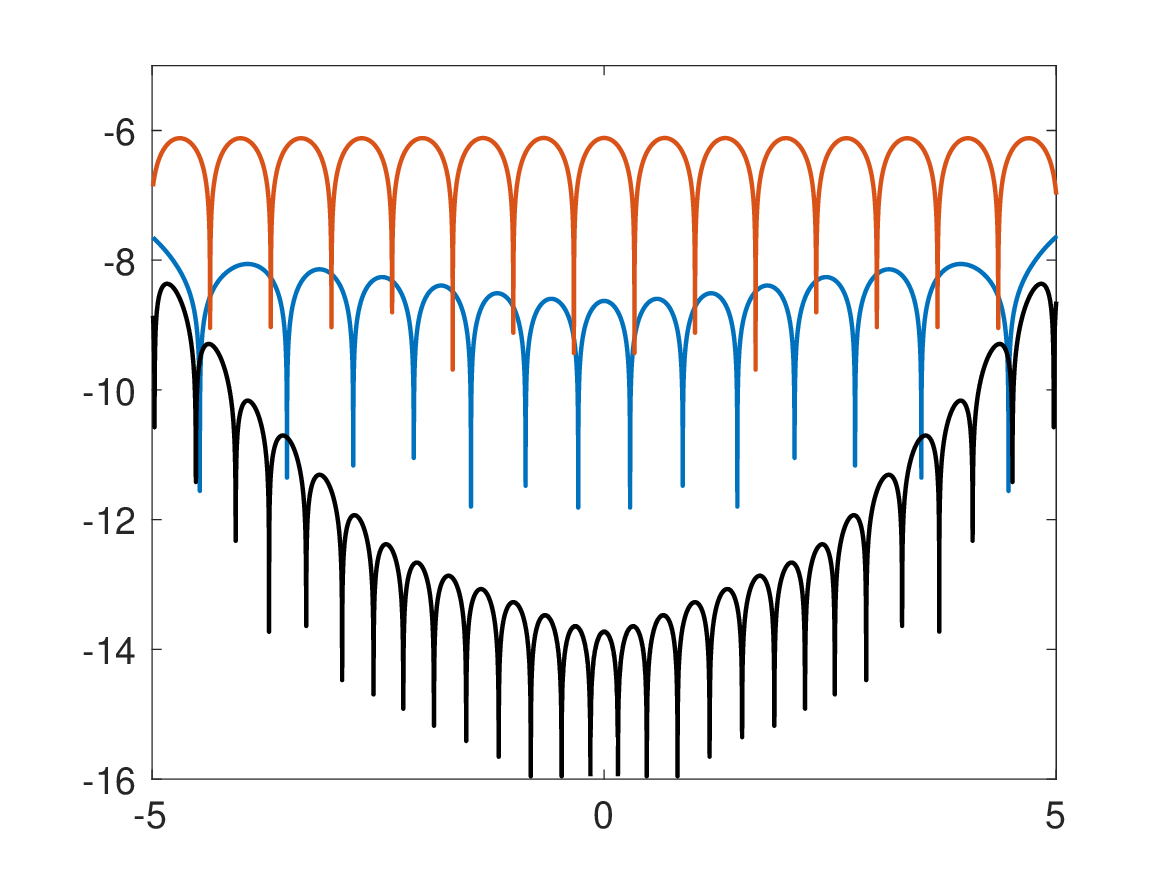}   
\end{subfigure}
\begin{subfigure}{0.32\textwidth}
\includegraphics[width=\textwidth]{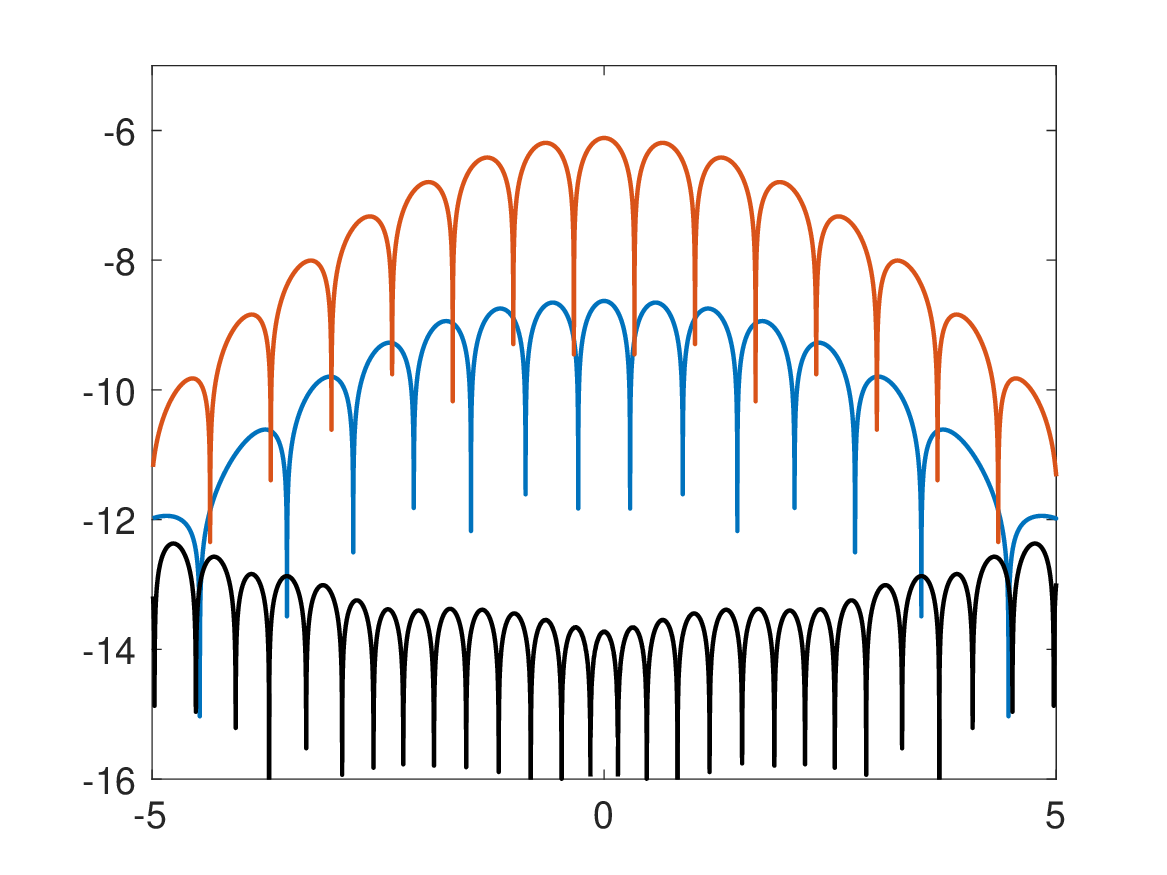}   
\end{subfigure}
\caption{The case $\sigma=1.25$, $\rho=\sigma/2$ and $N=16$ (cosine sum of lenght $8$).   Left: Gaussian function, Middle: Error in $L_\infty([-5,5])$-norm  for ESPRIT (blue), ESPIRA (red) and Algorithm \ref{alg11} (black), Right: error in  $L_\infty([-5,5],\mathrm{e}^{-t^2/4\rho})$ for ESPRIT (blue), ESPIRA (red) and Algorithm \ref{alg11} (black).}
\label{fig33}
\end{figure}
 
Using Prony type methods based on function values we can construct approximations only in a finite segment. 
In or numerical example, we consider the approximation of a function $f(t)=\mathrm{e}^{-t^2/2\sigma}$ by an exponential sum of order $N=16$, i.e., a cosine sum of length $8$, in the interval  $[-2\pi,2\pi]$.  
We take $\sigma=1.25$ and $\rho=\sigma/2$.  For  ESPRIT and ESPIRA we use $L=100$ equidistant sample values at points $t_k=\frac{h(2k+1)}{2}$,  $k=0,\ldots,99$,  with $h=\frac{\pi}{60}$ and employ the fact that $f(t_k)=f(-t_k)$.
We obtain the maximum errors $2.2\cdot 10^{-8}$ for ESPRIT and $7.8\cdot10^{-7}$ for ESPIRA.  Using the differential Prony type method in Algorithm \ref{alg11} we get the error $4.3\cdot 10^{-9}$, see Figure \ref{fig33}. 
In Figure \ref{fig33}, we present the approximation error on a logarithmic scale on the interval $[-5,5]$ for ESPRIT, ESPIRA and Algorithm 1 (with $\rho=\sigma/2$) in the maximum norm and in the weighted $L_\infty([0,5],\mathrm{e}^{-t^2/4\rho})$ norm.

Finally, we note that the direct application of the ESPRIT  or the ESPIRA algorithm for approximation of the Gaussian by an exponential sum $\sum_{j=1}^{N} \gamma_{j} z_{j}^{t}$ (without the restriction that the approximation has to be a sum of cosines with real coefficients) we obtain smaller absolute errors, 
where the performance of the two algorithms depends on the number of given data. ESPIRA outperforms ESPRIT for $L> 350$,  while for smaller $L$, ESPRIT provides the smaller error.
The obtained exponential sums are indeed complex. We have used here the algorithms from \cite{DPP21}.  Note that these methods have a larger complexity than Algorithm \ref{alg11}, $\mathcal{O}(L^{3})$ for ESPRIT and  $\mathcal{O}(L(N^{3}+\log L))$  for ESPIRA.
 

\section{Conclusion}
In this paper, we have applied the differential approximation method to construct a cosine sum that approximates the Gaussian ${\mathrm e}^{-t^{2}/2\sigma}$ with exponential decay.

 Previous approaches, where exponential error convergence rates for completely monotone functions were shown, employed a representation of these functions by the Laplace transform, see \cite{Bbook}, which can then be discretized by a quadrature rule.
In \cite{Greengard2022}, a quadrature rule for the inverse Laplace transform formula (\ref{green}) has been applied. 
 Our convergence proof is conceptionally different and is not based on the Laplace transform.
Instead, it  heavily relies on the Gauss-Hermite quadrature formula and uses the fact that the Gaussian occurs as the weight function for orthogonal Hermite polynomials.

We conclude that  the convergence analysis for the approximation with exponential sums is always closely related to quadrature formulas that converge exponentially for special analytic functions. Consequently, there is also a close relation to
rational approximation, since these quadrature rules are usually related to rational or meromorphic functions \cite{TW14, T21}.
 For further study of the connection between exponential and rational approximation we refer to \cite{BM09, DPP21, DPR23, D24, PP19, WDT21}. 

 It remains an open question, which other smooth functions can be approximated by short exponential sums with the same error convergence rates. For the approximation of the Gaussian,
the key point has been to obtain the frequency parameters $\lambda_j$, $j=1,\ldots,N$ as zeros of a scaled Hermite polynomial.
Thus, the question arises,  whether we can  approximate other weight functions $w$ by exponential sums using the differential  method, thereby obtaining suitable frequency parameters as zeros of scaled orthogonal polynomials.
We may consider orthogonal polynomials $p_n$ with respect to a weight function $w$ in the segment $(a,b)$ (or on ${\mathbb R}$)  satisfying
$$
\textstyle \int\limits_a^{b} p_n(t) p_m(t) w(t) \, \mathrm{d} t = \delta_{nm}. 
$$
These polynomials $p_n$ can be defined by the Rodrigues formula 
\begin{equation}\label{rodgen}
\textstyle p_n(t)=\frac{1}{\alpha_n w(t)} \frac{\mathrm{d}^{n}}{ \mathrm{d} t^{n}} \left(w(t) (q(t))^{n} \right),
\end{equation}
where $\alpha_n$ is some constant and $q$ is some algebraic polynomial of degree at most $2$.
The question is  now the following: 
Can we obtain an approximation of a weight function $w$ (instead of a Gaussian function) by exponential sums $\sum_{j=1}^{n} \gamma_j \mathrm{e}^{\lambda_j t}$ in the segment $(a,b)$ such that the frequencies $\lambda_j$,  $j=1,\ldots,n$, can be expressed  via zeros of these (scaled) orthogonal polynomials $p_n$? 
The case of approximation of Gaussian functions using the described idea is the simplest one, since we have here $q(t)=1$ in (\ref{rodgen}).  
  
\section*{Acknowledgement}
The authors thank Lennart Hübner (KU Leuven) and Yannick Riebe (University Göttingen) for several discussions and improvements of this manuscript.
 The second author acknowledges support by the EU MSCA-RISE-2020 project EXPOWER  and of the DFG CRC 1456.

\section*{Statements and Declarations}
\textbf{Competing Interests:} The authors declare that they have no competing interests.

\small

\end{document}